\newtheorem{theorem}{Theorem}
\newtheorem{corollary}[theorem]{Corollary}
\newtheorem*{mydef}{Definition}
\newtheorem*{myexample}{Example}
\newtheorem{lemma}[theorem]{Lemma}
\newtheorem{problem}[theorem]{Problem}
\newtheorem{proposition}[theorem]{Proposition}
\newtheorem{remark}[theorem]{Remark}
\newcommand{\Ker}{\operatorname{Ker}}
\newcommand{\Image}{\operatorname{Image}}
\newcommand{\End}{\operatorname{End}}
\newcommand{\Hom}{\operatorname{Hom}}
\begin{document}
\title{Dual Automorphism-invariant Modules}
\author{Surjeet Singh}
\address{House No. 424, Sector No. 35 A, Chandigarh-160036, India}
\email{ossinghpal@yahoo.co.in}
\author{Ashish K. Srivastava}
\address{Department of Mathematics and Computer Science, St. Louis University, St.
Louis, MO-63103, USA}
\email{asrivas3@slu.edu}
\keywords{discrete modules, lifting modules, perfect ring, pseudo-projective modules, quasi-projective modules.}
\subjclass[2000]{16D50, 16P40}

\begin{abstract}
A module $M$ is called an automorphism-invariant module if every isomorphism between two essential submodules of $M$ extends to an automorphism of $M$. This paper introduces the notion of dual of such modules. We call a module $M$ to be a dual automorphism-invariant module if whenever $K_1$ and $K_2$ are small submodules of $M$, then any epimorphism $\eta:M/K_1\rightarrow M/K_2$ with small kernel lifts to an endomorphism $\varphi$ of $M$. In this paper we give various examples of dual automorphism-invariant module and study its properties. In particular, we study abelian groups and prove that dual automorphism-invariant abelian groups must be reduced. It is shown that over a right perfect ring $R$, a lifting right $R$-module $M$ is dual automorphism-invariant if and only if $M$ is quasi-projective.
\end{abstract}

\maketitle

\bigskip

\noindent All our rings have identity element and modules are right unital. A right $R$-module $M$ is called an {\it automorphism-invariant module} if every isomorphism between two essential submodules of $M$ extends to an automorphism of $M$. Equivalently, $M$ is an automorphism-invariant module if for any automorphism $\sigma$ of $E(M)$, $\sigma(M)\subseteq M$ where $E(M)$ is the injective hull of $M$ (see \cite{ZL} and \cite{SS}). 

Recall that a right $R$-module $M$ is called a {\it quasi-injective module} ({\it pseudo-injective module}) if $M$ is invariant under any endomorphism (monomorphism) of $E(M)$. Thus, clearly, any quasi-injective module or pseudo-injective module is automorphism-invariant. 
In this paper we introduce the notion of dual of an automorphism-invariant module. 

 A submodule $N$ of a module $M$ is called {\it small} in $M$ (denoted as $N\subset_{s} M$) if $N+K\neq M$ for any proper submodule $K$ of $M$. The Jacobson radical of a module $M$ is the sum of all small submodules of $M$ and is denoted by $J(M)$. For any term not defined here the reader is referred to \cite{CLVW} and \cite{MM}.  

\begin{mydef}
A right $R$-module $M$ is called a dual automorphism-invariant module if whenever $K_1$ and $K_2$ are small submodules of $M$, then any epimorphism $\eta:M/K_1\rightarrow M/K_2$ with small kernel lifts to an endomorphism $\varphi$ of $M$.
\end{mydef}

\bigskip
\[
\xymatrix{
M \ar[d]^{}  \ar[r]^{\varphi} &M \ar[d]^{}\\
M/K_1 \ar[r]^{\eta}                 &M/K_2}
\]

\bigskip

\noindent We will show that, in fact, the above endomorphism $\varphi$ must be an automorphism of $M$. First, we have the following 

\begin{lemma} \label{insert1}
Let $M$ be a dual automorphism-invariant module. If $\varphi:M\rightarrow M$ is an epimorphism with small kernel, then $\varphi$ is an automorphism. 
\end{lemma}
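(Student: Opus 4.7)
The plan is to use dual automorphism-invariance to manufacture a right inverse for $\varphi$, and then to exploit the smallness of $\ker\varphi$ to conclude that this right inverse is two-sided. The technical core is choosing the pair $(K_1,K_2)$ and the map $\eta$ so that the lifting produces new information rather than tautologically reproducing $\varphi$.

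Set $K:=\ker\varphi$; by hypothesis $K$ is small in $M$. Since $\varphi$ is surjective, the first isomorphism theorem gives an isomorphism $\bar\varphi\colon M/K\to M$, $m+K\mapsto\varphi(m)$. I would then take $K_1=0$, $K_2=K$ (both small) and feed the inverse isomorphism $\eta:=\bar\varphi^{-1}\colon M\to M/K$ into the definition of dual automorphism-invariance; this $\eta$ is certainly an epimorphism, and it has zero kernel, which is small. By hypothesis, $\eta$ lifts to an endomorphism $\psi\colon M\to M$ with $\pi_K\circ\psi=\bar\varphi^{-1}\circ 1_M=\bar\varphi^{-1}$, where $\pi_K\colon M\to M/K$ is the natural projection. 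Composing both sides on the left with $\bar\varphi$ and using the identity $\bar\varphi\circ\pi_K=\varphi$ yields $\varphi\psi=1_M$.

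The remaining step is standard. For each $m\in M$ write $m=\psi\varphi(m)+(m-\psi\varphi(m))$; the first summand lies in $\psi(M)$ and the second in $K=\ker\varphi$, so $M=\psi(M)+K$. Because $K$ is small in $M$, this forces $\psi(M)=M$, so $\psi$ is surjective. The identity $\varphi\psi=1_M$ also gives injectivity of $\psi$, so $\psi$ is an automorphism of $M$, and hence $\varphi=\psi^{-1}$ is an automorphism as well.

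The only real pitfall is in selecting $\eta$: the naive choice of the natural quotient map $M\to M/K$ for $\eta$ would be satisfied by the lift $\psi=1_M$ and yield no information, while feeding in the isomorphism $\bar\varphi^{-1}$ is precisely what converts the lifting condition into the existence of a section for $\varphi$.
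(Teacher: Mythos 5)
Your proposal is correct and follows essentially the same route as the paper's proof: take $K_1=0$, $K_2=K$, lift $\bar\varphi^{-1}\colon M\to M/K$ to an endomorphism of $M$, compose with $\bar\varphi$ to get $\varphi\psi=1_M$, and use smallness of $K$ to show the lift is surjective and hence a two-sided inverse for $\varphi$. The only cosmetic difference is in the order of the last two steps (the paper deduces surjectivity of the lift directly from the lifting condition before establishing $\varphi\lambda=1_M$, whereas you derive $\varphi\psi=1_M$ first and then use it to get $\psi(M)+K=M$), which changes nothing of substance.
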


\begin{proof}
Let $K=\Ker(\varphi)$. Then $\varphi$ induces an isomorphism $\bar{\varphi}: \frac{M}{K}\rightarrow M$. Consider $\bar{\varphi}^{-1}:M\rightarrow \frac{M}{K}$. Since $M$ is a dual automorphism-invariant module, by definition, $\bar{\varphi}^{-1}$ lifts to an endomorphism $\lambda:M\rightarrow M$. We have $\lambda(M)+K=M$. As $K\subset_{s} M$, we get $\lambda(M)=M$. Thus $\lambda$ is an epimorphism. Then for any $x\in M$, $\bar{\varphi}^{-1}(x)=\lambda(x)+K$. Now $x=\bar{\varphi}\bar{\varphi}^{-1}(x)=\bar{\varphi}(\lambda(x)+K)=\varphi \lambda(x)$. This proves that $\varphi \lambda=1_{M}$. Thus $\varphi^{-1}=\lambda$ and hence $\varphi$ is an automorphism.    
\end{proof}

\noindent As a consequence, it follows that 

\begin{corollary} \label{insert2}
A right $R$-module $M$ is a dual automorphism-invariant module if and only if for any two small submodules $K_1$ and $K_2$ of $M$, any epimorphism $\eta:M/K_1\rightarrow M/K_2$ with small kernel lifts to an automorphism $\varphi$ of $M$.
\end{corollary}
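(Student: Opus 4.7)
The backward implication is immediate, since any automorphism is in particular an endomorphism. For the forward implication, the strategy is to take the endomorphism $\varphi$ produced by the dual automorphism-invariance hypothesis and check that it satisfies the hypotheses of Lemma \ref{insert1}; namely, that $\varphi$ is an epimorphism with small kernel. Once this is done, Lemma \ref{insert1} upgrades $\varphi$ to an automorphism for free.

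Writing $\pi_i : M \to M/K_i$ for the canonical projections, the lifting condition reads $\pi_2 \varphi = \eta \pi_1$. Surjectivity of $\varphi$ is the easy half: since $\eta$ and $\pi_1$ are surjective, so is $\pi_2 \varphi$, giving $\varphi(M) + K_2 = M$, and smallness of $K_2$ then forces $\varphi(M) = M$.

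The main step, which I expect to be the only substantive point, is showing that $\ker \varphi$ is small in $M$. I would set $L = \pi_1^{-1}(\ker \eta)$ and deduce from the commutative square $\pi_2 \varphi = \eta \pi_1$ that $\ker \varphi \subseteq L$. The key observation is that $L$ is itself small: if $L + N = M$, then reducing modulo $K_1$ and using the smallness of $L/K_1 = \ker \eta$ in $M/K_1$ forces $N + K_1 = M$, whereupon the smallness of $K_1$ gives $N = M$. Since any submodule of a small submodule is small, $\ker \varphi$ is small, and Lemma \ref{insert1} completes the argument. No real obstacle is anticipated; the proof is essentially a bookkeeping exercise in propagating the three smallness hypotheses on $K_1$, $K_2$, and $\ker \eta$ through the lifted map.
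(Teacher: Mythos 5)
Your proof is correct and follows essentially the same route as the paper's: lift $\eta$ to an endomorphism $\varphi$ using the definition, verify $\varphi$ is surjective via $\varphi(M)+K_2=M$ and smallness of $K_2$, show $\ker\varphi$ is small by noting $\ker\varphi\subseteq\pi_1^{-1}(\ker\eta)$ and that this preimage is small, then invoke Lemma~\ref{insert1}. The only cosmetic difference is that the paper first composes $\eta$ with $\pi_1$ to get an epimorphism $M\to M/K_2$ and lifts that, whereas you lift $\eta$ directly; both produce the same $\varphi$ and require the same smallness bookkeeping.
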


\begin{proof}
Let $M$ be a dual automorphism-invariant right $R$-module. Let $K_1$ and $K_2$ be any two small submodules of $M$ and let $\eta:M/K_1\rightarrow M/K_2$ be any epimorphism with small kernel. Let $\ker(\eta)=L/K_1$. Then $L$ is small in $M$. If $\pi:M\rightarrow M/K_1$ is a canonical epimorphism, then $\lambda=\eta \pi:M\rightarrow M/K_2$ has kernel $L$. Thus $\lambda:M\rightarrow M/K_2$ is an epimorphism with small kernel. By definition, $\lambda$ lifts to an endomorphism $\varphi$ of $M$. Now $\varphi(M)+K_2=M$. As $K_2\subset_{s}M$, we get $\varphi(M)=M$. Thus $\varphi$ is an epimorphism with small kernel, and hence by above lemma, $\varphi$ is an automorphism. 
The converse is obvious.     
\end{proof}

\bigskip

\begin{myexample}
A module with no non-zero small submodule is easily seen to be a dual automorphism-invariant module. Thus all the semiprimitive modules belong to the family of dual automorphism-invariant modules. In particular, the regular modules studied by Zelmanowitz in \cite{Z} are dual automorphism-invariant. 
\end{myexample}

\bigskip

\section{$V$-rings and Dual automorphism-invariant modules}

\bigskip

\noindent Recall that a ring $R$ is called a right $V$-ring if every simple right $R$-module is injective. The class of right $V$-rings was introduced by Villamayor \cite{MV}. It is a well-known unpublished result due to Kaplansky that a commutative ring is von Neumann regular if and only if it is a $V$-ring. The class of $V$-rings includes von Neumann regular rings with artinian primitive factors. It is well-known that if $R$ is a right $V$-ring then for every right $R$-module $M$, $J(M)=0$ and so $M$ has no nonzero small submodule. For the sake of completeness, we present the proof in the next proposition.   

\begin{proposition} \label{vring}
Let $R$ be a right $V$-ring. Then every right $R$-module is dual automorphism-invariant.
\end{proposition}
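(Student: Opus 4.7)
The plan is to reduce to the observation made in the example immediately before the proposition: any module with no nonzero small submodule is vacuously dual automorphism-invariant (the definition only constrains how epimorphisms on quotients by small submodules behave, so if the only small submodule is $0$ there is essentially nothing to verify). So the real content is the classical fact that over a right $V$-ring $R$, every right $R$-module $M$ satisfies $J(M)=0$.

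First I would prove $J(M)=0$. Pick any nonzero $x\in M$ and set $I=\operatorname{ann}_R(x)$, so that $xR\cong R/I$ as right $R$-modules. Since $I$ is a proper right ideal, Zorn's lemma produces a maximal right ideal $\mathfrak{m}\supseteq I$. The composition of the isomorphism $xR\cong R/I$ with the canonical projection $R/I\twoheadrightarrow R/\mathfrak{m}$ gives a nonzero homomorphism $f\colon xR\to R/\mathfrak{m}$ with $f(x)=1+\mathfrak{m}\neq 0$. Because $R$ is a right $V$-ring, the simple module $R/\mathfrak{m}$ is injective, so $f$ extends to $\tilde f\colon M\to R/\mathfrak{m}$. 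The kernel $\ker\tilde f$ is then a maximal submodule of $M$ that does not contain $x$. Since $J(M)$ is contained in every maximal submodule of $M$, this forces $x\notin J(M)$. As $x$ was arbitrary, $J(M)=0$.

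Now recall that $J(M)$ equals the sum of all small submodules of $M$, so $J(M)=0$ means the only small submodule of $M$ is $0$. Thus, given any two small submodules $K_1,K_2$ of $M$, we have $K_1=K_2=0$, and any epimorphism $\eta\colon M/K_1\to M/K_2$ with small kernel is simply an epimorphism $\eta\colon M\to M$ whose kernel is $0$, i.e.\ an automorphism. This $\eta$ is visibly a lift of itself through the identity projections $M\to M/0$, so the defining condition of a dual automorphism-invariant module holds trivially.

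There is no real obstacle here; the only nontrivial ingredient is the standard injectivity-extension argument producing maximal submodules avoiding a prescribed nonzero element, and once $J(M)=0$ is in hand the proposition follows immediately from the example preceding it.
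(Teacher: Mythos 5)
Your proof is correct, but it reaches the key classical fact, that $J(M)=0$ for every right module $M$ over a right $V$-ring, by a different route from the paper. The paper takes a submodule $N\subseteq M$ maximal with respect to not containing $x$, observes that $(xR+N)/N$ is an essential simple submodule of $M/N$, and uses injectivity of this simple module to conclude it is a direct summand and hence all of $M/N$, so $M=xR+N$ with $N$ proper, which shows $xR$ cannot be small. You instead pick a maximal right ideal $\mathfrak{m}\supseteq\operatorname{ann}_R(x)$, use injectivity of $R/\mathfrak{m}$ to extend the induced nonzero map $xR\to R/\mathfrak{m}$ to $\tilde f\colon M\to R/\mathfrak{m}$, and note that $\ker\tilde f$ is a maximal submodule of $M$ avoiding $x$, so $x\notin J(M)$. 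Both are standard proofs; the paper works with the ``sum of small submodules'' description of the radical while you work with the ``intersection of maximal submodules'' description and then translate. Your extension-of-a-map argument is arguably the more commonly quoted one in the $V$-ring literature, while the paper's version is self-contained in that it directly produces the obstruction to smallness of $xR$. Once $J(M)=0$ is in hand, both arguments finish in the same way via the observation (the paper's example preceding the proposition) that a module with no nonzero small submodule is vacuously dual automorphism-invariant, and your explicit check that $\eta\colon M/0\to M/0$ lifts to itself is fine.
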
 

\begin{proof}
Let $M$ be a nonzero right $R$-module. Let $x \,(\neq 0) \in M$. By Zorn's lemma there exists a submodule $N$ of $M$ maximal with respect to not containing $x$. Then the intersection of all nonzero submodules of $M/N$ is $(xR+N)/N$ and it is simple. Since $R$ is a right $V$-ring, $(xR+N)/N$ is injective. Then $(xR+N)/N$ being a summand of $M/N$ gives $M/N=(xR+N)/N$. Thus $M=xR+N$. This shows that $M$ has no nonzero small submodule and consequently, $M$ is dual automorphism-invariant. 
\end{proof}

\noindent It is quite natural to ask here whether the converse of above result also holds. We proceed to answer this in the affirmative but first, we have the following useful observation. 

\begin{lemma} \label{dsdual}
Let $M_1, M_2$ be right $R$-modules. If $M=M_1\oplus M_2$ is dual automorphism-invariant, then any homomorphism $f:M_1\rightarrow M_2/K_2$ with $K_2$ small in $M_2$ and $\Ker(f)$ small in $M_1$ lifts to a homomorphism $g:M_1\rightarrow M_2$.
\end{lemma}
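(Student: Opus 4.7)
The plan is to encode the map $f:M_1\to M_2/K_2$ as the off-diagonal part of an epimorphism $\eta:M\to M/K_2$, and then to extract the desired lift $g$ from the matrix form of the automorphism of $M$ produced by dual automorphism-invariance. The crucial choice is to take $K_1=0$, which is (trivially) small, so that $M/K_1=M$. Using the natural identification $M/K_2 \cong M_1\oplus(M_2/K_2)$ available because $K_2\subseteq M_2$, I would define $\eta:M\to M/K_2$ by
\[
\eta(m_1,m_2) \;=\; \bigl(m_1,\ f(m_1) + (m_2+K_2)\bigr).
\]

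Next, I would verify that $\eta$ satisfies the hypotheses of Corollary \ref{insert2}. Surjectivity is immediate: given a target $(m_1,\overline{n})$, lift $\overline{n}-f(m_1)$ to some $m_2\in M_2$ and observe $\eta(m_1,m_2)=(m_1,\overline{n})$. A direct computation shows $\ker(\eta)=\{(0,k):k\in K_2\}$. Since $K_2\subset_s M_2$ and $M_2$ is a direct summand of $M$, a standard argument (a small submodule of a summand remains small in the whole direct sum) gives that this kernel is small in $M$.

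Corollary \ref{insert2} then yields an automorphism $\varphi:M\to M$ with $\pi\circ\varphi=\eta$, where $\pi:M\to M/K_2$ is the canonical projection. Writing $\varphi$ as a $2\times 2$ matrix $(\varphi_{ij})$ with $\varphi_{ij}:M_j\to M_i$, the commutativity condition decomposes into four equations. Comparing first coordinates with $m_2=0$ forces $\varphi_{11}=\mathrm{id}_{M_1}$, and comparing second coordinates with $m_2=0$ yields precisely $\pi_{M_2/K_2}\circ\varphi_{21}=f$. Hence $g:=\varphi_{21}:M_1\to M_2$ is a homomorphism lifting $f$, as required.

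The main obstacle is packaging $f$ into an $\eta$ with two simultaneous properties: it must be an epimorphism with small kernel (so the hypothesis of dual automorphism-invariance is triggered), and its structure must be such that the $(2,1)$-entry of the resulting lift reads off as a lift of $f$. The displayed formula above is designed to accomplish both. Beyond that, the argument is elementary matrix bookkeeping together with the standard fact about smallness in direct summands. Incidentally, the hypothesis that $\ker(f)$ is small in $M_1$ does not appear to be used in this approach.
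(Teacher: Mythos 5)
Your proof is correct and follows essentially the same argument as the paper: define $\eta:M\to M/K_2$ by $\eta(m_1,m_2)=(m_1,\,f(m_1)+(m_2+K_2))$, note that $\eta$ is epic with small kernel $0\oplus K_2$, lift it to an automorphism $\varphi$ via Corollary~\ref{insert2}, and take $g$ to be the $(2,1)$-entry $\pi_2\circ\varphi|_{M_1}$. Your incidental observation is also accurate: the paper's own proof likewise never invokes the hypothesis that $\Ker(f)$ is small in $M_1$, since $\Ker(\eta)=0\oplus K_2$ is small regardless of $f$.
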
 

\begin{proof}
We have an epimorphism $\sigma:M\rightarrow \frac{M}{K_2}$ given by $\sigma(m_1+m_2)=m_1+f(m_1)+(m_2+K_2)$ for $m_1\in M_1, m_2\in M_2$. Since $K_2$ is small in $M_2$ and $M_2\subset M$, we get that $K_2$ is small in $M$. Now, as $M$ is dual automorphism-invariant, by Corollary \ref{insert2}, $\sigma$ lifts to an automorphism $\eta$ of $M$. Let $x_1\in M_1$ and $\eta(x_1)=u_1+u_2$ where $u_1\in M_1, u_2\in M_2$. Then $u_1+u_2+K_2=(x_1+K_2)+f(x_1)$, which gives $u_2+K_2=f(x_1)$. Let $\pi_2: M\rightarrow M_2$ be the natural projection. Then $g=\pi_2 \eta|_{M_1}: M_1\rightarrow M_2$ lifts $f$.   
\end{proof} 

Now we are ready to prove the following characterization of right $V$-rings in terms of dual automorphism-invariant modules.

\begin{theorem} \label{vringchar}
A ring $R$ is a right $V$-ring if and only if every finitely generated right $R$-module is dual automorphism-invariant.
\end{theorem}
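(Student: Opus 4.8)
The forward implication needs nothing new: by Proposition~\ref{vring}, if $R$ is a right $V$-ring then \emph{every} right $R$-module, in particular every finitely generated one, is dual automorphism-invariant. So all the content is in the converse, and I would prove it by showing that every simple right $R$-module $S$ is injective. By Baer's criterion this amounts to showing that every homomorphism $f\colon I\rightarrow S$ from a right ideal $I$ of $R$ extends to $R$; so I assume, towards a contradiction, that some such $f$ does not extend.

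From $f$ I would form the pushout of the inclusion $I\hookrightarrow R$ along $f$, obtaining a short exact sequence $0\rightarrow S\xrightarrow{\,j\,}P\xrightarrow{\,q\,}R/I\rightarrow 0$, where $P=(S\oplus R)/\{(f(i),-i):i\in I\}$ is generated by $j(S)$ together with the image of $1\in R$ and so is finitely generated. The standard pushout bookkeeping shows this sequence splits if and only if $f$ extends to $R$, so our sequence is non-split, i.e.\ $j(S)$ is not a direct summand of $P$. The key observation is then that a simple submodule $T$ of a finitely generated module $P$ is either a direct summand or small in $P$: if $T\not\subseteq J(P)$, choose a maximal submodule $N$ of $P$ with $T\not\subseteq N$ (possible since $P$ is finitely generated), whence $T\cap N=0$ and $T+N=P$, so $T$ is a summand; while if $T\subseteq J(P)$ then $T$ is small, because any proper submodule of the finitely generated $P$ lies in a maximal submodule, which contains $J(P)\supseteq T$. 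Applying this to $T=j(S)$ and using that it is not a summand, I conclude $j(S)$ is small in $P$.

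Now set $M=(R/I)\oplus P$; both summands are finitely generated, hence $M$ is finitely generated and therefore dual automorphism-invariant by hypothesis. I would apply Lemma~\ref{dsdual} with $M_1=R/I$, $M_2=P$ and $K_2=j(S)$: let $\bar q\colon P/j(S)\xrightarrow{\ \sim\ }R/I$ be the isomorphism induced by $q$ and put $g=\bar q^{-1}\colon R/I\rightarrow P/j(S)$. Then $\Ker(g)=0$ is small in $R/I$ and $K_2=j(S)$ is small in $P$, so by the lemma $g$ lifts to a homomorphism $h\colon R/I\rightarrow P$ with $\bar\pi h=g$, where $\bar\pi\colon P\rightarrow P/j(S)$ is the projection. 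Since $q=\bar q\,\bar\pi$, this yields $qh=\bar q\,\bar\pi h=\bar q g=\mathrm{id}_{R/I}$, so $h$ is a section of $q$ and the sequence $0\rightarrow S\rightarrow P\rightarrow R/I\rightarrow 0$ splits, contradicting its non-splitting. Hence every $f\colon I\rightarrow S$ extends to $R$, so $S$ is injective and $R$ is a right $V$-ring.

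The two points I expect to need care are: (i) the small-submodule dichotomy for simple submodules of finitely generated modules, which is exactly what licenses feeding $j(S)$ into Lemma~\ref{dsdual} and which genuinely uses both that $P$ is finitely generated and that the extension is non-split; and (ii) arranging that the lift provided by Lemma~\ref{dsdual} is an honest section of $q$ — this forces the choice $g=\bar q^{-1}$ (so that $\Ker(g)=0$), since feeding in the projection $P\rightarrow P/j(S)$ instead would give back only a self-map of $P$ and no contradiction. Everything else is routine pushout and Baer bookkeeping, plus the preliminary check that $R/I$, $P$ and $M$ are all finitely generated so that the hypothesis applies.
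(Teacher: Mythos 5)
Your proof is correct, and its engine is the same as the paper's: Lemma~\ref{dsdual}, applied to a two-generated direct sum, is used to split an extension of a cyclic module by a simple module that sits inside the bigger summand as a small, non-direct-summand submodule (the forward direction is Proposition~\ref{vring} in both cases). Where you genuinely diverge is in how that extension is manufactured. The paper picks $x\in E(S)\setminus S$ and works inside the injective hull: $xR\cong R/A$ is cyclic and uniform, $S=B/A$ is small in it essentially for free (simplicity of $S$ plus uniformity of $xR$), and the map $R/B\to R/A$ obtained from Lemma~\ref{dsdual} by lifting the identity of $R/B\cong \frac{R/A}{B/A}$ exhibits the uniform module $R/A$ as a nontrivial direct sum, which is the contradiction. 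You avoid the injective hull altogether: Baer's criterion supplies a non-extendable $f\colon I\to S$, the pushout gives a non-split extension $0\to S\to P\to R/I\to 0$ with $P$ two-generated, and you then need the extra observation — your summand-or-small dichotomy for simple submodules of finitely generated modules — to see that $j(S)$ is small in $P$ before Lemma~\ref{dsdual} can be fed the isomorphism $\bar q^{-1}\colon R/I\to P/j(S)$; the resulting lift is a section of $q$, contradicting non-splitness rather than uniformity. The paper's route buys the smallness of the simple submodule with no extra work; yours buys independence from injective hulls and isolates the precise homological obstruction (a non-split extension of a cyclic by a simple), at the modest cost of proving the dichotomy, whose argument you give correctly and which indeed uses both finite generation of $P$ and non-splitness. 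All the steps you flag (finite generation of $R/I$, $P$, $M$; the choice $g=\bar q^{-1}$ so that the kernel is small; $q=\bar q\,\bar\pi$ giving $qh=\mathrm{id}$) check out, so the proposal is a complete proof.
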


\begin{proof}
Suppose every finitely generated right $R$-module is dual automorphism-invariant. We wish to show that $R$ is a right $V$-ring. Assume to the contrary that $R$ is not a right $V$-ring. Then there exists a simple right $R$-module $S$ such that $S$ is not injective. Let $E(S)$ be the injective hull of $S$. Then $E(S)\neq S$. Choose any $x\in E(S)\setminus S$. Then $S$ is small in $xR$ and $xR$ is uniform. Let $A=ann_r(x)$. As $S$ is a submodule of $xR\cong R/A$, we may take $S=B/A$ for some $A\subset B\subset R_R$. Consider $M=\frac{R}{A}\times \frac{R}{B}$. As $M$ is finitely generated, by hypothesis $M$ is dual automorphism-invariant. We have the identity homomorphism $1_{R/B}: R/B \rightarrow R/B\cong \frac{R/A}{B/A}$ where $\Ker(1_{R/B})=0$ is small in $R/B$ and $B/A$ is small in $R/A$. By Lemma \ref{dsdual}, the identity mapping on $R/B$ can be lifted to a homomorphism $\eta: \frac{R}{B} \rightarrow \frac{R}{A}$. Thus $\Image(\eta)$ is a summand of $R/A$, which is a contradiction to the fact that $R/A\,(\cong xR)$ is uniform. Hence $R$ is a right $V$-ring.

The converse is obvious from the Proposition \ref{vring}.
\end{proof}

\begin{remark}
It may be noted here that if we weaken the hypothesis above and assume that $R$ is a ring such that every cyclic right $R$-module is dual automorphism-invariant, then $R$ need not be a right $V$-ring. We know that every cyclic module over a commutative ring is quasi-projective and it will be shown in Corollary \ref{pseudo} that every pseudo-projective and hence quasi-projective module is dual automorphism-invariant. Thus, if we consider $R$ to be a commutative ring which is not von-Neumann regular, then every cyclic module over $R$ is dual automorphism-invariant but $R$ is not a $V$-ring.  
\end{remark}


\bigskip

\section{More Examples of Dual automorphism-invariant modules}

\noindent In this section we will discuss various other examples of dual automorphism-invariant modules. A module $M$ is called a {\it quasi-projective module} ({\it pseudo-projective module}) if for every submodule $N$ of $M$, any homomorphism (epimorphism) $\varphi: M\rightarrow M/N$ can be lifted to a homomorphism $\psi: M\rightarrow M$, that is, the diagram below commutes.   

\bigskip
\[
\xymatrix{
&M \ar[ld]_{\psi}  \ar[d]^{\varphi}\\
M \ar[r]^{\pi}                 &M/N}
\]
\bigskip

\noindent Clearly, every quasi-projective module is pseudo-projective.

\begin{proposition} \label{pseudo}
Any pseudo-projective module is dual automorphism-invariant.
\end{proposition}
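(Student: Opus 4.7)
The proof should be very short: the definition of dual automorphism-invariance asks exactly for a lift of an epimorphism $\eta: M/K_1 \to M/K_2$ (with small kernel) along the two canonical projections, and pseudo-projectivity provides lifts of arbitrary epimorphisms to $M/N$ for any submodule $N$. So the plan is simply to collapse the data of $\eta$ into a single epimorphism $M \to M/K_2$ and apply pseudo-projectivity directly, without ever using the smallness hypotheses.

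More precisely, let $M$ be pseudo-projective, let $K_1$ and $K_2$ be small submodules of $M$, and let $\eta: M/K_1 \to M/K_2$ be an epimorphism with small kernel. Write $\pi_1: M \to M/K_1$ and $\pi_2: M \to M/K_2$ for the canonical surjections. Consider the composite
\[
\eta \circ \pi_1 : M \longrightarrow M/K_2,
\]
which, being a composition of two epimorphisms, is itself an epimorphism. Since $M$ is pseudo-projective, this epimorphism lifts through $\pi_2$: there exists $\varphi \in \End(M)$ with $\pi_2 \circ \varphi = \eta \circ \pi_1$. This is precisely the statement that the square in the definition of dual automorphism-invariance commutes, so $\varphi$ is the desired lift.

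There is essentially no obstacle here; the one thing worth a sentence in the write-up is the observation that the smallness of $K_1$, $K_2$, and $\Ker(\eta)$ is never invoked in the argument. Pseudo-projectivity is a strictly stronger lifting property than what the definition of dual automorphism-invariance demands, and by Lemma \ref{insert1} (or Corollary \ref{insert2}) the lift $\varphi$ produced this way is automatically an automorphism of $M$, though this stronger conclusion is not needed for the proposition itself.
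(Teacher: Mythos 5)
Your proof is correct and is essentially the paper's own argument: compose $\eta$ with the projection $M\to M/K_1$, lift the resulting epimorphism $M\to M/K_2$ through $\pi_2$ using pseudo-projectivity, and observe that the resulting square is exactly the one required. Your side remark that the smallness hypotheses are never used is accurate (the paper's proof also never invokes them) and is a fair observation about the strength of pseudo-projectivity relative to what the definition demands.
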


\begin{proof}
Suppose $M$ is a pseduo-projective module. Let $L_1, L_2$ be two small submodules of $M$ and $\sigma:\frac{M}{L_1}\rightarrow \frac{M}{L_2}$ be an epimorphism. Let $\pi_1: M\rightarrow \frac{M}{L_1}$ be a natural mapping. As $M$ is pseudo-projective, $\sigma \pi_1$ lifts to an endomorphism $\eta$ of $M$. Let $\pi_2: M\rightarrow \frac{M}{L_2}$ be a natural mapping. Then $\pi_2 \eta=\sigma \pi_1$. Therefore $\pi_2 \eta (L_1)=\sigma \pi_1(L_1)=0$ gives $\eta(L_1)\subseteq L_2$. Hence $\eta$ is a lifting of $\sigma$. This proves that $M$ is dual automorphism-invariant.   
\end{proof}

\noindent Now we will show that dual automorphism-invariant modules need not be pseudo-projective. But, first we have the following useful observation.

\begin{lemma} \label{ds}
Let $M_1, M_2$ be right $R$-modules. If $M=M_1\oplus M_2$ is pseudo-projective, then $M_1$ is $M_2$-projective and $M_2$ is $M_1$-projective. 
\end{lemma}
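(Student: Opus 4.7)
The plan is to prove $M_1$ is $M_2$-projective by exactly the same device used in Lemma \ref{dsdual}, only now exploiting pseudo-projectivity of $M$ directly, with no smallness hypothesis on $N$ or on $\Ker(f)$. By symmetry, the same argument gives that $M_2$ is $M_1$-projective.

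Concretely, fix a submodule $N \subseteq M_2$ and a homomorphism $f : M_1 \to M_2/N$. Since $N \subseteq M_2$, there is a natural identification $M/N \cong M_1 \oplus (M_2/N)$. Using this, I would define $\sigma : M \to M/N$ by
\[
\sigma(m_1 + m_2) \;=\; \bigl(m_1,\; f(m_1) + (m_2 + N)\bigr).
\]
This $\sigma$ is an $R$-homomorphism, and it is the sum of the canonical projection $M \to M/N$ (which is surjective) and the map $(m_1,m_2)\mapsto(0,f(m_1))$; hence $\sigma$ is an epimorphism. Pseudo-projectivity of $M$ therefore supplies an endomorphism $\eta : M \to M$ with $\pi_N \circ \eta = \sigma$, where $\pi_N : M \to M/N$ is the canonical surjection.

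To extract the desired lift of $f$, I would compute $\eta$ on $M_1$: write $\eta(x) = u_1 + u_2$ with $u_i \in M_i$ for $x \in M_1$. Comparing $\pi_N(\eta(x)) = (u_1, u_2 + N)$ with $\sigma(x) = (x, f(x))$ forces $u_1 = x$ and $u_2 + N = f(x)$. Setting $g = p_2 \circ \eta|_{M_1} : M_1 \to M_2$, where $p_2 : M \to M_2$ is the natural projection, one obtains $\pi \circ g = f$ for $\pi : M_2 \to M_2/N$ the canonical map. Thus $g$ lifts $f$, proving $M_1$ is $M_2$-projective.

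The only place that requires any care is verifying that $\sigma$ is actually an epimorphism (not merely a homomorphism), since pseudo-projectivity, unlike quasi-projectivity, only lifts epimorphisms; this is the sole structural difference from the usual proof for quasi-projective decompositions, and it is handled by the observation above that $\sigma$ differs from the canonical surjection $M \to M/N$ by an endomorphism of $M/N$ that fixes the $M_1$-component. Everything else is bookkeeping with the splitting $M = M_1 \oplus M_2$, and the symmetric claim follows by swapping the roles of $M_1$ and $M_2$.
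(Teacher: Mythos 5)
Your proof follows essentially the same route as the paper's: you build the identical map $\sigma(m_1+m_2)=m_1+f(m_1)+(m_2+N)$, lift it to an endomorphism $\eta$ of $M$ by pseudo-projectivity, and read off $g=\pi_2\circ\eta|_{M_1}$ as the lift of $f$, then appeal to symmetry. One small repair to your surjectivity argument: the sum of a surjection and an arbitrary homomorphism need not be surjective (take $\mathrm{id}+(-\mathrm{id})$), so that first line of reasoning does not stand on its own; the correct observation, which your closing sentence almost states, is that $\sigma=\psi\circ\pi_N$ where $\psi:M/N\to M/N$, $\psi(y_1,\,y_2+N)=(y_1,\,f(y_1)+(y_2+N))$, is an \emph{automorphism} (with inverse $(y_1,\,y_2+N)\mapsto(y_1,\,(y_2+N)-f(y_1))$), not merely an endomorphism, so $\sigma$ is surjective as the composite of a surjection with an automorphism. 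The paper simply asserts that $\sigma$ is an epimorphism without comment, so you were right to flag it; just be sure to verify invertibility of $\psi$ rather than relying on the additive decomposition.
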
 

\begin{proof}
Let $f: M_1\rightarrow M_2/N$ be a homomorphism. It induces an epimorphism $\sigma: M\rightarrow M/N$ given by $\sigma(x_1+x_2)=x_1+f(x_1)+ (x_2+N)$ for $x_1\in M_1, x_2\in M_2$. Since $M$ is pseudo-projective, $\sigma$ lifts to an endomorphism $\eta$ of $M$. Let $x_1\in M_1$ and $\eta(x_1)=u_1+u_2$ where $u_1\in M_1, u_2\in M_2$. Then $u_1+u_2+N=x_1+f(x_2)\in M_1\oplus \frac{M_2}{N}$, $u_2+N=f(x_2)$. 

Let $\pi_2: M\rightarrow M_2$ be the natural projection. Then $\pi_2 \eta |_{M_1}: M_1\rightarrow M_2$ is such that $\pi_2 \eta(x_1)=u_2$. This shows that $g=\pi_2 \eta |_{M_1}: M_1\rightarrow M_2$ lifts $f$. Hence $M_1$ is $M_2$-projective. Similarly it can be shown that $M_2$ is $M_1$-projective.  
\end{proof} 

\begin{proposition} \label{ss}
If every right module over a ring $R$ is pseudo-projective, then $R$ is semisimple artinian.
\end{proposition}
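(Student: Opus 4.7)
The plan is to reduce the statement to the classical fact that $R$ is semisimple artinian if and only if every short exact sequence of right $R$-modules splits, i.e.\ every right $R$-module is projective. So my goal is to show that the hypothesis (every right $R$-module is pseudo-projective) forces every short exact sequence to split.

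The first step is to exploit that the hypothesis is closed under direct sums: given \emph{any} two right $R$-modules $A$ and $B$, the module $A\oplus B$ is pseudo-projective by assumption. Then I would apply Lemma~\ref{ds} to the decomposition $A\oplus B$ to conclude that $A$ is $B$-projective (and $B$ is $A$-projective) for every pair $A,B$. Thus relative projectivity becomes universal: every right $R$-module is $N$-projective for every right $R$-module $N$.

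The second step is to turn universal relative projectivity into the splitting of short exact sequences. Given any short exact sequence
\[
0 \longrightarrow K \longrightarrow B \stackrel{\pi}{\longrightarrow} A \longrightarrow 0,
\]
the previous paragraph tells us that $A$ is $B$-projective. Applying this to the submodule $K\subseteq B$ and the identity map $1_A : A \to B/K$, we get a lift $s : A \to B$ with $\pi s = 1_A$, which splits the sequence. Hence every right $R$-module is projective, and so $R$ is semisimple artinian.

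I do not anticipate a serious obstacle here: Lemma~\ref{ds} has already done the substantive work, and everything else is a direct unwinding of the definition of relative projectivity together with the Wedderburn characterization of semisimple artinian rings. The only thing to be careful about is choosing the decomposition $A\oplus B$ for \emph{every} pair of modules, so that relative projectivity is obtained in full generality before specializing to a given short exact sequence.
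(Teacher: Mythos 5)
Your proof is correct and uses the same key lemma (Lemma~\ref{ds}) as the paper, but specializes differently. The paper applies Lemma~\ref{ds} only to the module $R\oplus R/A$ for a right ideal $A$, concluding that $R/A$ is $R$-projective, so the sequence $0\to A\to R\to R/A\to 0$ splits; since every right ideal is a direct summand, $R$ is semisimple artinian. You instead apply Lemma~\ref{ds} to $A\oplus B$ for an arbitrary pair of modules, deduce universal relative projectivity, and conclude that every short exact sequence splits, i.e.\ every module is projective. Both routes are valid; the paper's is more economical in that it only invokes the hypothesis on the small family of modules $R\oplus R/A$, which would matter if one wanted to weaken the hypothesis (say, to finitely generated or $2$-generated modules), whereas your version makes the intermediate conclusion that all modules are projective fully explicit.
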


\begin{proof}
Let $A$ be any right ideal of $R$. Since every right $R$-module is pseudo-projective, $R\oplus \frac{R}{A}$ is pseudo-projective. By Lemma \ref{ds}, $R/A$ is $R$-projective. Therefore the identity mapping on $R/A$ lifts to a mapping from $R/A$ to $R$. Thus the exact sequence $0\rightarrow A\rightarrow R\rightarrow R/A\rightarrow 0$ splits. Therefore $A$ is a summand of $R$. This shows that every right ideal of $R$ is a summand of $R$. Hence $R$ is semisimple artinian. 
\end{proof}

\begin{remark}
If $R$ is a right $V$-ring which is not right artinian (for example, a non-artinian commutative von Neumann regular ring), then by Proposition \ref{vring} and Proposition \ref{ss}, it follows that $R$ admits a dual automorphism-invariant module which is not pseudo-projective. 
\end{remark}

\bigskip

\section{Properties of dual automorphism-invariant modules}

\noindent In this section we discuss various properties of dual automorphism-invariant modules.

\begin{proposition} \label{summand}
Any direct summand of a dual automorphism-invariant module is dual automorphism-invariant.
\end{proposition}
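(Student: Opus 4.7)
The plan is to reduce any epimorphism with small kernel on a summand to one on the whole module, lift it by the dual automorphism-invariance of $M$, and then project back. Write $M=N\oplus N'$ with $N$ the given summand, let $K_1,K_2$ be small submodules of $N$, and let $\eta:N/K_1\to N/K_2$ be an epimorphism with small kernel. I want to produce an endomorphism $\psi$ of $N$ satisfying $\pi_2\psi=\eta\pi_1$, where $\pi_i:N\to N/K_i$ are the canonical surjections.

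First I would check the preliminary fact that if $K\subseteq N$ is small in $N$ and $N$ is a direct summand of $M$, then $K$ is small in $M$: if $K+L=M$ for some $L\leq M$, modularity gives $N=K+(N\cap L)$, whence $N\cap L=N$, so $N\subseteq L$ and thus $L=M$. Applying this, both $K_1$ and $K_2$ are small in $M$. Next, since $K_i\subseteq N$, the direct sum decomposition descends to $M/K_i\cong N/K_i\oplus N'$, so I can define
\[
\tilde{\eta}:M/K_1\longrightarrow M/K_2,\qquad \tilde{\eta}\bigl((n+K_1)+n'\bigr)=\eta(n+K_1)+(n'+K_2),
\]
which is clearly an epimorphism whose kernel is $\ker(\eta)\oplus 0$, a small submodule of $M/K_1\cong N/K_1\oplus N'$ (because $\ker(\eta)$ is small in $N/K_1$).

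Now I would invoke Corollary \ref{insert2}: the epimorphism $\tilde{\eta}$ lifts to an automorphism $\varphi$ of $M$, meaning the square with $\pi_{K_1}^M$ and $\pi_{K_2}^M$ commutes. Let $p:M\to N$ denote the projection along $N'$ and $\iota:N\hookrightarrow M$ the inclusion, and set $\psi=p\varphi\iota$. To verify that $\psi$ lifts $\eta$, take $n\in N$ and write $\varphi(n)=n_1+n_1'$ with $n_1\in N$, $n_1'\in N'$; the commutativity of the extended square, read under the identification $M/K_2\cong N/K_2\oplus N'$, forces $n_1'=0$ and $n_1+K_2=\eta(n+K_1)$, i.e.\ $\psi(n)+K_2=\eta(n+K_1)$, as required.

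The only part that is not purely formal is the smallness bookkeeping, in particular the passage from smallness in $N$ to smallness in $M$ (handled by modularity above) and the verification that the kernel of $\tilde{\eta}$ is small in $M/K_1$; once those two points are in place, the direct sum splitting of $M/K_i$ does all the work and the restriction of the lifted automorphism of $M$ produces the desired endomorphism of $N$ automatically.
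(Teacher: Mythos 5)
Your argument is correct and is essentially the same as the paper's: both extend the given map on $N/K_1$ to $\sigma\oplus 1_{N'}$ on $M/K_1$, lift by dual automorphism-invariance of $M$, and compress back to $N$ via the inclusion and projection. You simply fill in a few details the paper leaves implicit (the modularity argument for smallness, the explicit check that the $N'$-component vanishes).
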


\begin{proof}
Let $M$ be a dual automorphism-invariant right $R$-module and let $M=A\oplus B$. Let $K_1, K_2$ be two small submodules of $A$ and $\sigma: \frac{A}{K_1}\rightarrow \frac{A}{K_2}$ be an epimorphism with $\Ker(\sigma) \subset_s \frac{A}{K_1}$. Clearly, $K_1, K_2$ are small in $M$ and $\sigma'=\sigma \oplus 1_{B}: \frac{M}{K_1}  \rightarrow \frac{M}{K_2}$ is an epimorphism with $\Ker(\sigma')\subset_s \frac{M}{K_1}$. Since $M$ is dual automorphism-invariant, $\sigma'$ lifts to an endomorphism $\eta$ of $M$. For the inclusion map $i_1:A\rightarrow M$ and the projection $\pi_1:M\rightarrow A$, the map $\pi_1 \eta i_1: A\rightarrow A$ lifts $\sigma$. Hence $A$ is dual automorphism-invariant. This shows that any direct summand of a dual automorphism-invariant module is dual automorphism-invariant. 
\end{proof}

\begin{remark}
\begin{enumerate}[(i)]
\item The direct sum of two dual automorphism-invariant modules need not be dual automorphism-invariant. For example, $\mathbb Z_2$ and $\mathbb Z_4$ are dual automorphism-invariant $\mathbb Z$-modules but $\mathbb Z_2 \oplus \mathbb Z_4$ is not a dual automorphism-invariant $\mathbb Z$-module. 

\bigskip

\item The submodules of a dual automorphism-invariant module need not be dual automorphism-invariant. For example, $M=\frac{\mathbb Z}{8\mathbb Z}\oplus \frac{\mathbb Z}{8\mathbb Z}$ is a dual automorphism-invariant $\mathbb Z$-module but $N=\frac{2\mathbb Z}{8\mathbb Z}\oplus \frac{\mathbb Z}{8\mathbb Z} \subset M$ is not dual automorphism-invariant.
\end{enumerate}
\end{remark}

\bigskip

\noindent A module $M$ is called a {\it hollow} module if every proper submodule of $M$ is small in $M$. A module is called {\it local} if it is hollow and has a unique maximal submodule. 

\bigskip

\noindent For the direct sum of local modules, we have the following

\begin{proposition}
If $M_1, M_2$ are two local modules such that $M_1\oplus M_2$ is dual automorphism-invariant, then $M_1$ is $M_2$-projective and $M_2$ is $M_1$-projective.
\end{proposition}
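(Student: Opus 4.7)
The plan is to reduce the claim directly to Lemma~\ref{dsdual}, exploiting the fact that in a hollow (in particular, local) module every proper submodule is automatically small. By symmetry it suffices to prove that $M_1$ is $M_2$-projective, so I would take an arbitrary submodule $N$ of $M_2$ and an arbitrary homomorphism $f : M_1 \to M_2/N$, and produce a lift $g : M_1 \to M_2$.

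First I would dispose of the degenerate cases. If $N = M_2$, then $M_2/N = 0$ and $f = 0$, so the zero map lifts $f$. If $f = 0$, the same is true. So assume $N \subsetneq M_2$ and $f \neq 0$. Because $M_2$ is local, hence hollow, every proper submodule of $M_2$ is small; in particular $N$ is small in $M_2$. Because $M_1$ is local, hence hollow, and $\ker(f) \neq M_1$ (as $f \neq 0$), the submodule $\ker(f)$ is small in $M_1$.

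At this point the hypotheses of Lemma~\ref{dsdual} are met: $M = M_1 \oplus M_2$ is dual automorphism-invariant, $N$ is small in $M_2$, and $\ker(f)$ is small in $M_1$. Thus $f$ lifts to a homomorphism $g : M_1 \to M_2$, which shows $M_1$ is $M_2$-projective. Interchanging the roles of $M_1$ and $M_2$ yields that $M_2$ is $M_1$-projective.

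The proof is therefore essentially bookkeeping once Lemma~\ref{dsdual} is in hand; there is no real obstacle. The only subtlety to watch is making sure the smallness hypotheses of Lemma~\ref{dsdual} are genuinely automatic, which is exactly what ``local $\Rightarrow$ hollow'' gives us, together with the trivial observation that we need not worry about maps whose kernel equals the whole module or whose image is trivial.
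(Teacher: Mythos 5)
Your proof is correct and follows essentially the same route as the paper: reduce to Lemma~\ref{dsdual} by observing that in a local (hence hollow) module every proper submodule is small. You are in fact slightly more careful than the paper, which asserts that $\Ker(f)$ is small without first excluding the degenerate possibilities $f=0$ or $N=M_2$ where $\Ker(f)$ would equal $M_1$; your explicit handling of those cases closes that small gap.
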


\begin{proof}
Consider any diagram 
\bigskip
\[
\xymatrix{
&M_1 
\ar[d]^{f}\\
M_2 \ar[r]^{g}                 &M_2/K\ar[r]   &0}
\]
with exact row. Since $M_1$ and $M_2$ are local, $K$ is a small submodule of $M_2$ and $\Ker(f)$ is a small submodule of $M_1$. Therefore, by Lemma \ref{dsdual}, $f$ lifts to a homomorphism $h:M_1\rightarrow M_2$. This shows that $M_1$ is $M_2$-projective. Similarly it can be shown that $M_2$ is $M_1$-projective.   
\end{proof} 

\noindent Consider the following conditions on a module $N$;

\bigskip

\noindent ($D1$): For every submodule $A$ of $N$, there exists a decomposition $N=N_1\oplus N_2$ such that $N_1\subseteq A$ and $N_2\cap A \subset_{s} N$.

\bigskip

\noindent ($D2$): If $A$ is a submodule of $N$ such that $N/A$ is isomorphic to a direct summand of $N$, then $A$ is a direct summand of $N$.

\bigskip

\noindent ($D3$): If $A$ and $B$ are direct summands of $N$ with $A+B=N$, then $A\cap B$ is a direct summand of $N$.  

\bigskip

\noindent It is well-known that if a module $N$ satisfies the condition ($D2$), then it also satisfies the condition ($D3$). If $N$ satisfies the condition ($D1$), then it is called a {\it lifting module}. If $N$ satisfies the conditions ($D1$) and ($D3$), then it is called a {\it quasi-discrete module}. If $N$ satisfies the conditions ($D1$) and ($D2$), then it is called a {\it discrete module}. The following implication is well-known;
\begin{center}
discrete $\implies$ quasi-discrete $\implies$ lifting 
\end{center} 

\noindent Since any quasi-projective module satisfies the property ($D2$) and hence the property ($D3$), it is natural to ask whether a dual automorphism-invariant module satisfies the property ($D2$). We do not know the answer to this question, however we are able to show in the next proposition that every supplemented dual automorphism-invariant module satisfies the property ($D3$). Recall that a submodule $K$ is called a {\it supplement} of $N$ in $M$ if $K$ is minimal with respect to the property that $K+N=M$. As a consequence, it follows that $K\cap N$ is small in $K$ and hence in $M$. A module $M$ is called a {\it supplemented module} if every submodule of $M$ has a supplement.   

\begin{proposition} \label{d3}
If $M$ is a supplemented dual automorphism-invariant module, then $M$ satisfies the property ($D3$).
\end{proposition}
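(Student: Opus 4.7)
The plan is to write $M = A \oplus A' = B \oplus B'$ with $A + B = M$, set $I = A \cap B$, and produce a direct complement of $I$ in $M$. The first step is to use the supplemented hypothesis to pin down an almost-direct decomposition of $M$ across $I$. Since $M$ is supplemented and $B$ is a direct summand, $B$ is itself supplemented, so inside $B$ I pick a supplement $H$ of $I$: then $B = H + I$, $N_1 := H \cap I \subset_s H$, and hence $N_1 \subset_s M$. A short modularity check shows that $H$ is in fact a supplement of $A$ in $M$ with $H \cap A = N_1$. Symmetrically pick $K \subseteq A$ with $A = K + I$ and $N_2 := K \cap I \subset_s M$. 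Setting $L := H + K$ gives $L + I = H + K + I = H + A = M$, and using $H \subseteq B$, $K \subseteq A$ one checks $L \cap I \subseteq N_1 + N_2 \subset_s M$. So $L$ is a generalised supplement of $I$ with small intersection.

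The crux of the proof is to promote the almost-direct decomposition $M = L + I$, $L \cap I \subset_s M$, to an honest direct-summand decomposition with $I$ as one summand. My plan is to combine the isomorphisms $H/N_1 \cong M/A \cong A'$ and $K/N_2 \cong M/B \cong B'$ (both coming from the supplement data) with the decompositions $M = A \oplus A' = B \oplus B'$, and invoke Lemma \ref{dsdual} to produce homomorphisms $A \to A'$ and $B \to B'$ that lift the supplement-to-summand identifications up to small error. Assembling these lifts into a single endomorphism $e$ of $M$ should yield an idempotent with $e(M) = I$, which gives the required splitting $M = I \oplus \ker e$.

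The main obstacle is exactly the assembly step: producing the endomorphism and verifying that it is a genuine idempotent with the correct image. A naive lift obtained, say, by applying Corollary \ref{insert2} to some epimorphism $M/K_1 \to M/K_2$ built from the data above will only satisfy the desired projection identity modulo the small submodule $N_1 + N_2$; promoting it to an honest idempotent with image exactly $I$ requires a correction argument exploiting that modulo-small discrepancies in endomorphisms of a supplemented dual automorphism-invariant module can be absorbed into a genuine splitting. Once such an $e$ is produced, ($D3$) follows immediately from $M = e(M) \oplus \ker e = I \oplus \ker e$.
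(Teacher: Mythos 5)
Your proposal and the paper start from the same place (exploit the supplemented hypothesis to carve out a ``supplement-complement'' of $I = A\cap B$ with small overlap), but then diverge, and the point where they diverge is precisely the step you flag as ``the main obstacle'' and leave open. That step is not a routine correction argument; it is the whole content of the proposition, and the mechanism you gesture at does not exist in the stated generality.

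Concretely, your plan is to assemble lifts coming from Lemma \ref{dsdual} into an endomorphism $e$ of $M$ that ``should be'' an idempotent with $e(M)=I$. But dual automorphism-invariance only lets you lift maps between quotients $M/K_1\to M/K_2$ by small submodules to (auto)morphisms of $M$; it gives you no way to lift idempotents modulo a small submodule to genuine idempotents of $\End(M)$. Having $e^2-e$ factor through a small submodule does not make $e$ idempotent, and in general there is no ``absorbing modulo-small discrepancies into a genuine splitting'' without an extra hypothesis (e.g.\ projectivity, or idempotent-lifting in $\End(M)$). So as written the argument has a genuine gap at exactly the crucial point.

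The paper's proof routes around this difficulty, and it is worth seeing how. Instead of two supplements inside $A$ and $B$, it takes a single supplement $C$ of $A\cap B$ in $M$, puts $L=A\cap B\cap C$ (small in $M$), and observes the three-fold internal direct decomposition
\[
\frac{M}{L}=\frac{A\cap B}{L}\oplus\frac{A\cap C}{L}\oplus\frac{B\cap C}{L}.
\]
Writing $M=A\oplus A'$, one also has $\frac{M}{L}=\frac{A\cap B}{L}\oplus\frac{A\cap C}{L}\oplus\frac{A'+L}{L}$. Comparing these two decompositions produces an automorphism of $M/L$ that fixes $\frac{A\cap C}{L}$ and carries $\frac{B\cap C}{L}$ over to $\frac{A'+L}{L}$. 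Since $L\subset_s M$, Corollary \ref{insert2} lifts this to an automorphism $\eta$ of $M$. The punchline is that one computes $\eta(B)=(A\cap B)\oplus A'$; now $\eta(B)$ is a direct summand of $M$ because $B$ is and $\eta$ is an automorphism, and $A\cap B$ is visibly a summand of $\eta(B)$, hence of $M$. No idempotent is ever constructed by hand --- the summand structure is inherited from the summand $B$ through the automorphism. That transfer-of-summand trick is the missing idea in your sketch, and it is what makes the hypothesis ``lifts to an \emph{automorphism}'' (Corollary \ref{insert2}) actually bite.

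If you want to salvage your line of attack, the lesson is: do not try to manufacture an idempotent projecting onto $I$; instead aim to build an automorphism $\eta$ of $M$ with $\eta(B)$ (or $\eta(A)$) equal to a module of the explicit form $I\oplus(\text{complement of }A\text{ or }B)$, and let the automorphism carry the summand property for you.
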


\begin{proof}
Let $M$ be a supplemented dual automorphism-invariant module. Let $A$ and $B$ be direct summands of $M$ such that $A+B=M$. We wish to show that $A\cap B$ is a direct summand of $M$. Since $M$ is a supplemented module, there exists a submodule $C$ of $M$ such that $A\cap B+C=M$ and $A\cap B\cap C\subset_{s} M$. Now, clearly we have $B=A\cap B+B\cap C$ and $A=A\cap B+A\cap C$. This gives $M=A\cap B+B\cap C+ A\cap C$. Set $L=A\cap B\cap C$. 

Now, as $C=A\cap C+B\cap C$, we have $L=A\cap B \cap (A\cap C + B\cap C)\subset_{s} M$. Thus, 
\[\frac{M}{L}=\frac{A\cap B}{L} \oplus \frac{A\cap C}{L} \oplus \frac{B\cap C}{L}.\]
 Since $A$ is a direct summand of $M$, we have $M=A\oplus A'$ for some submodule $A'$ of $M$. Then 
\[\frac{M}{L}=\frac{A}{L}\oplus \frac{A'+L}{L} = \frac{A\cap B}{L} \oplus \frac{A\cap C}{L} \oplus \frac{A'+L}{L}.\]
 Set $T=\frac{A\cap B}{L} \oplus \frac{A'+L}{L}$. Let $\pi: M/L \rightarrow T$ be the natural projection. Let us denote the restriction of $\pi$ to $T$ by $\pi_{T}$. Then $\pi_{T}: T\rightarrow T$ is an isomorphism. Thus we have an isomorphism \[1_{A\cap C/L} \oplus \pi_{T}: M/L \rightarrow M/L.\]
 
 Since $M$ is dual automorphism-invariant, this map lifts to an automorphism 
 \[\eta: M\rightarrow M.\] 
We have 
 \[\eta(B)=(A\cap B) + (A'+L)=(A\cap B)+A'=(A\cap B)\oplus A'.\] 
 This shows that $A\cap B$ is a direct summand of $\eta(B)$. Now as $\eta(B)$ is a direct summand of $M$, we have that $A\cap B$ is a direct summand of $M$. Thus $M$ satisfies the property $(D3)$. 
\end{proof}

\bigskip

\section{Dual automorphism-invariant abelian groups}

\noindent In this section we study dual automorphism-invariant abelian groups. We begin with the following useful result which will help us in constructing more examples of dual automorphism-invariant modules.
\begin{proposition}
Let $P$ be a projective right $R$-module that has no nonzero small submodule, and $M$ be any quasi-projective right $R$-module such that $\Hom_R(\frac{M}{K}, P)=0$ for any small submodule $K$ of $M$. Then $P\oplus M$ is dual automorphism-invariant. 
\end{proposition}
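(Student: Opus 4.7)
My plan is to exploit the block structure of maps between direct sums. Write $N = P \oplus M$. The preliminary observation on which everything rides is that every small submodule $K$ of $N$ must be contained in $M$, and is in fact small in $M$ as well. For containment, epimorphisms send small submodules to small submodules, so the image of $K$ under the canonical surjection $N \to N/M \cong P$ is small in $P$, hence zero by the hypothesis on $P$; thus $K \subseteq M$. For smallness in $M$, any $L \subseteq M$ with $K + L = M$ satisfies $K + (L+P) = N$, forcing $L + P = N$; since $L + P = L \oplus P \subseteq M \oplus P = N$, projection onto $M$ gives $L = M$.

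With this containment, for small submodules $K_1, K_2$ of $N$ we get $K_i \subseteq M$ and $N/K_i \cong P \oplus (M/K_i)$. An epimorphism $\eta : N/K_1 \to N/K_2$ with small kernel then decomposes as a block matrix
\[
\eta = \begin{pmatrix} \eta_{11} & \eta_{12} \\ \eta_{21} & \eta_{22} \end{pmatrix}
\]
with $\eta_{11} \in \End(P)$, $\eta_{12} \in \Hom_R(M/K_1, P)$, $\eta_{21} \in \Hom_R(P, M/K_2)$, and $\eta_{22} \in \Hom_R(M/K_1, M/K_2)$. Since $K_1$ is small in $M$, the hypothesis $\Hom_R(M/K_1, P) = 0$ forces $\eta_{12} = 0$.

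To produce the lift $\varphi : N \to N$, I define
\[
\varphi = \begin{pmatrix} \eta_{11} & 0 \\ \varphi_{21} & \varphi_{22} \end{pmatrix},
\]
where $\varphi_{21} : P \to M$ is a lift of $\eta_{21} : P \to M/K_2$ through the canonical surjection $M \to M/K_2$ (which exists because $P$ is projective), and $\varphi_{22} : M \to M$ is a lift of the composite $M \to M/K_1 \xrightarrow{\eta_{22}} M/K_2$ through $M \to M/K_2$ (which exists because $M$ is quasi-projective). A direct matrix check confirms $\pi_2 \varphi = \eta \pi_1$, where $\pi_i : N \to N/K_i$ are the canonical projections, so $\varphi$ lifts $\eta$ as required.

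The main obstacle is the first step: seeing that the hypothesis on $P$ forces small submodules of $N$ into $M$. This containment is what legitimizes both the block decomposition $N/K_i \cong P \oplus (M/K_i)$ and the direct appeal to the $\Hom$-vanishing hypothesis. Once it is in place, projectivity of $P$ and quasi-projectivity of $M$ furnish the two needed lifts in routine fashion; in fact neither the epi assumption nor the small-kernel assumption on $\eta$ is actually used in the construction, so the argument shows slightly more than required.
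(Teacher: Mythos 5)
Your proof is correct and follows essentially the same path as the paper's: show every small submodule of $N=P\oplus M$ lands in $M$ (hence $N/K_i \cong P\oplus M/K_i$), decompose the map into a $2\times2$ block matrix, kill the $(1,2)$ block via the $\Hom$-vanishing hypothesis, and lift the remaining three blocks using projectivity of $P$ and quasi-projectivity of $M$. You are a bit more careful than the paper in two spots — you explicitly verify that $K$ is small in $M$ (not just contained in it), which is needed to invoke the $\Hom$ hypothesis, and you write out the matrix identity $\pi_2\varphi=\eta\pi_1$ rather than checking it element-wise — but the underlying argument is the same, and your closing remark that neither the epi nor small-kernel hypothesis on $\eta$ is actually used is a correct observation that applies equally to the paper's proof.
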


\begin{proof}
Set $N=P\oplus M$. We have projections $\pi_1:N\rightarrow P$, and $\pi_2:N\rightarrow M$. Let $K$ be a small submodule of $N$. Then $\pi_1(K)\subset_s P$. This gives $\pi_1(K)=0$ as $P$ has no nonzero small submodule. Therefore $K\subset M$. Let $K_1, K_2$ be two small submodules of $N$. Then 
\[\frac{N}{K_1}=P\oplus \frac{M}{K_1}, \quad \frac{N}{K_2}=P\oplus \frac{M}{K_2}.\] 

\noindent Let $\sigma: \frac{N}{K_1}\rightarrow \frac{N}{K_2}$ be an epimorphism. Now $\sigma$ may be viewed as $\sigma= \left[ 
\begin{array}{cc}
\sigma_{11} & \sigma_{12} \\ 
\sigma_{21} & \sigma_{22}
\end{array}
\right] $, where $\sigma_{11}:P\rightarrow P$, $\sigma_{12}:\frac{M}{K_1}\rightarrow P$, $\sigma_{21}:P\rightarrow \frac{M}{K_2}$, $\sigma_{22}:\frac{M}{K_1}\rightarrow \frac{M}{K_2}$. 
Set $\lambda_{11}=\sigma_{11}$, $\lambda_{12}:M\rightarrow P$ naturally given by $\sigma_{12}$, and $\lambda_{21}:P\rightarrow M$ a lifting of $\sigma_{21}$. As $M$ is quasi-projective, $\sigma_{22}$ lifts to an endomorphism $\lambda_{22}$ of $M$. 

\noindent Let $\lambda=\left[ 
\begin{array}{cc}
\lambda_{11} & \lambda_{12} \\ 
\lambda_{21} & \lambda_{22}
\end{array}
\right] $. Then $\lambda$ is an endomorphism of $N$. As $\lambda_{12}=0$ by the hypothesis, for any $x\in K_1$, \[ \left[ 
\begin{array}{cc}
\lambda_{11} & \lambda_{12} \\ 
\lambda_{21} & \lambda_{22}
\end{array}
\right]  \left[ 
\begin{array}{c}
0 \\ 
x
\end{array}
\right] = \left[ 
\begin{array}{c}
\lambda_{12}(x) \\ 
\lambda_{22}(x)
\end{array}
\right] = \left[ 
\begin{array}{c}
0 \\ 
\lambda_{22}(x)
\end{array}
\right]. \] As $\lambda_{22}$ is a lifting of $\sigma_{22}$, $\lambda_{22}(K_1)\subseteq K_2$. Hence $\lambda$ lifts $\sigma$. This proves that $P\oplus M$ is dual automorphism-invariant. 
\end{proof}

\noindent In particular, for abelian groups we have the following

\begin{corollary} \label{dualabelianexample}
Let $P$ be a projective abelian group and let $M$ be any torsion quasi-projective abelian group. Then $P\oplus M$ is dual automorphism-invariant.
\end{corollary}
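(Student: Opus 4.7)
The plan is to apply the preceding proposition verbatim, so the task reduces to checking its three hypotheses for the pair $(P,M)$: that $P$ is projective with no nonzero small submodule, that $M$ is quasi-projective, and that $\Hom_{\mathbb{Z}}(M/K,P)=0$ for every small submodule $K$ of $M$.

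First I would argue that $P$ has no nonzero small submodule. Since subgroups of free abelian groups are free, every projective abelian group is in fact free, say $P\cong \bigoplus_{i\in I}\mathbb{Z}$. I would then verify $J(P)=0$: for any nonzero $x\in P$, picking a coordinate $i$ with $x_i\neq 0$ and a prime $p\nmid x_i$ produces a surjection $P\to \mathbb{Z}/p\mathbb{Z}$ (the $i$-th projection followed by reduction mod $p$) whose kernel is maximal and does not contain $x$. Combined with the standard observation that every small submodule lies inside the Jacobson radical (if $N$ is small and $L$ maximal, then $N+L\neq M$ forces $N\subseteq L$), this gives that $P$ has trivial Jacobson radical and hence no nonzero small submodule.

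Quasi-projectivity of $M$ is given by hypothesis. For the vanishing of $\Hom$, I would use that $M$ being torsion makes every quotient $M/K$ torsion, whereas $P$, as a free abelian group, is torsion-free; any homomorphism from a torsion group into a torsion-free group must be zero, so $\Hom_{\mathbb{Z}}(M/K,P)=0$ for all submodules $K$, and in particular for the small ones.

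With all three hypotheses confirmed, the preceding proposition applies and delivers dual automorphism-invariance of $P\oplus M$. I do not anticipate a real obstacle here; once the general-ring proposition is in place the corollary is essentially bookkeeping, the only structural inputs being that projective abelian groups are free and that homomorphisms from torsion groups to torsion-free groups vanish.
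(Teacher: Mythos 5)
Your proof is correct and follows the same strategy as the paper's: both simply verify the three hypotheses of the preceding proposition, using that a projective abelian group is free and hence has no nonzero small subgroup, and that $\Hom$ from a torsion group into a torsion-free group vanishes. The only cosmetic differences are that you verify $J(P)=0$ by an explicit surjection onto $\mathbb{Z}/p\mathbb{Z}$ (the paper instead projects a small submodule onto the $\mathbb{Z}$-coordinates), and you argue the $\Hom$-vanishing directly rather than via $\Hom(M/K,\mathbb{Z})=0$; both are fine.
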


\begin{proof}
As $P$ is a direct sum of copies of $\mathbb Z$ and $\mathbb Z$ has no nonzero small subgroup, $P$ has no nonzero small subgroup. For any small submodule $K$ of $M$, since $M/K$ is a torsion abelian group, $\Hom_{\mathbb Z}(\frac{M}{K}, \mathbb Z)=0$ and hence $\Hom_{\mathbb Z}(\frac{M}{K}, P)=0$. Thus the result follows from the above lemma. 
\end{proof}

\noindent The above result gives us plenty of examples of dual automorphism-invariant modules. 
\begin{myexample}
Let $M=\mathbb Z \oplus C$, where $C$ is a finite cyclic group. By Corollary \ref{dualabelianexample}, $M$ is dual automorphism-invariant but $M$ is not pseudo-projective unless $C=0$. 
\end{myexample}

\noindent We recall here some useful facts about abelian groups. For details we refer the reader to Fuchs \cite{Fuchs}. Let $G$ be an abelian group. An element $x\in G$ is said to be of {\it finite height}, if there exists an upper bound on all positive integers $k$ such that $p^ky=x$ for some prime number $p$ and some $y\in G$. An abelian group is said to be {\it bounded}, if there is an upper bound on the orders of its elements. A bounded abelian group is a direct sum of cyclic groups \cite[Theorem 17.2]{Fuchs}. A subgroup $H$ of $G$ is said to be {\it pure} in $G$, if $nG\cap H=nH$ for every integer $n$. If an element $x\in G$ is of order a prime number $p$ and has finite height, then there exists a summand $H$ of $G$ of finite order such that $x\in H$ \cite[Corollary 27.2]{Fuchs}. If a pure subgroup $H$ of $G$ is bounded, then $H$ is a summand of $G$ \cite[Theorem 27.5]{Fuchs}. The torsion subgroup of an abelian group is a pure subgroup. It follows that if the torsion subgroup $T$ of $G$ is bounded, then $T$ is a summand of $G$. An abelian group $G$ is called a {\it divisible group} if for each positive integer $n$ and every element $g\in G$, there exists $h\in G$ such that $nh=g$. An abelian group $G$ is called a {\it reduced group} if $G$ has no proper divisible subgroup. 

\begin{theorem} \label{divisible}
(see \cite{Fuchs}) If $G$ is an abelian group, then $G=D\oplus K$, where $D$ is divisible and $K$ is reduced. Furthermore, the structure of divisible abelian group is given as 
\[D\cong (\oplus_{m_p} \mathbb Z(p^{\infty}))\oplus (\oplus_{n} \mathbb Q).\]
\end{theorem}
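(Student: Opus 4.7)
The plan is to split the statement into two independent claims: first the decomposition $G=D\oplus K$, and second the structural description of the divisible part $D$.

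For the decomposition, I would let $D$ be the sum of all divisible subgroups of $G$. A direct computation shows that a sum of divisible subgroups is divisible: given $x=x_1+\cdots+x_k$ with $x_i$ in a divisible subgroup $D_i$ and any positive integer $n$, pick $y_i\in D_i$ with $ny_i=x_i$ and set $y=\sum y_i$. Hence $D$ is the largest divisible subgroup of $G$. Next, I would verify via Baer's criterion that every divisible abelian group is an injective $\mathbb{Z}$-module: a homomorphism $f:n\mathbb{Z}\to D$ extends to $\mathbb{Z}\to D$ by choosing $d\in D$ with $nd=f(n)$ and sending $1\mapsto d$. Injectivity of $D$ makes the inclusion $D\hookrightarrow G$ split, so $G=D\oplus K$. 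If $K$ contained a nonzero divisible subgroup $D'$, then $D\oplus D'$ would be a strictly larger divisible subgroup of $G$, contradicting maximality; therefore $K$ is reduced.

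For the structure theorem, I would first check that the torsion subgroup $T$ of $D$ is itself divisible (if $x\in T$ has order $m$ and $x=ny$, then $mny=mx=0$, forcing $y\in T$). Being divisible, hence injective, $T$ is a direct summand of $D$, giving $D=T\oplus F$ with $F$ torsion-free and divisible. The torsion part decomposes into primary components $T=\bigoplus_p T_p$ by the standard primary decomposition, and each $T_p$ remains divisible. To obtain $T_p\cong\bigoplus_{m_p}\mathbb{Z}(p^\infty)$, I would consider the socle $T_p[p]=\{x\in T_p:px=0\}$ as an $\mathbb{F}_p$-vector space, pick a basis $\{x_\alpha\}$, and use divisibility to choose compatible sequences $x_\alpha^{(1)}=x_\alpha$, $px_\alpha^{(n+1)}=x_\alpha^{(n)}$, generating Prüfer subgroups $C_\alpha\cong\mathbb{Z}(p^\infty)$. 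Finally, $F$ being torsion-free and divisible admits unique division (from $n(y-y')=0$ and torsion-freeness, $y=y'$), so $F$ becomes a $\mathbb{Q}$-vector space and a choice of basis yields $F\cong\bigoplus_n\mathbb{Q}$.

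The main obstacle is verifying that the Prüfer subgroups $C_\alpha$ assemble into an \emph{internal} direct sum equal to all of $T_p$. Independence follows because any nontrivial relation among the $C_\alpha$, after multiplying by a suitable power of $p$, would descend to a nontrivial linear relation among the chosen basis elements of the socle $T_p[p]$. Generation requires an induction on the $p$-power order: an element $x\in T_p$ of order $p^k$ has $p^{k-1}x\in T_p[p]$, which lies in a finite sum $\sum C_{\alpha_i}$; by subtracting an appropriate lift from the $C_{\alpha_i}$, one reduces the order of $x$ and iterates. With these pieces in place, the decomposition of $D$ as advertised follows by combining the decompositions $D=T\oplus F$, $T=\bigoplus_p T_p$, $T_p\cong\bigoplus_{m_p}\mathbb{Z}(p^\infty)$, and $F\cong\bigoplus_n\mathbb{Q}$.
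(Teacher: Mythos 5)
Your proof is correct. The paper does not prove this theorem itself --- it states it as a classical result and cites Fuchs --- and your argument is the standard one found there: take $D$ to be the (necessarily divisible) sum of all divisible subgroups, use Baer's criterion to identify divisible with injective over $\mathbb{Z}$ so that $D$ splits off with reduced complement, then split the divisible torsion subgroup $T$ off $D$, decompose $T$ into its $p$-primary components, build Pr\"ufer subgroups of $T_p$ over a basis of the socle $T_p[p]$, and view the torsion-free complement as a $\mathbb{Q}$-vector space. Your treatment of the one genuinely delicate point, namely that $T_p$ is the internal direct sum $\bigoplus_\alpha C_\alpha$ --- independence by multiplying a putative relation by $p^{n-1}$ so that it lands in the socle, and generation by induction on the $p$-power order of an element --- is correct.
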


We have the following observation for a torsion abelian group.

\begin{lemma} \label{torsiondivisible}
Let $G$ be a torsion abelian group such that $G$ is dual automorphism-invariant. Then $G$ is reduced. 
\end{lemma}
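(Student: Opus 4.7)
The plan is to suppose $G$ is not reduced, use Theorem \ref{divisible} together with Proposition \ref{summand} to peel off a Pr\"ufer summand that is itself dual automorphism-invariant, and then contradict Lemma \ref{insert1}. First I would write $G = D \oplus K$ with $D$ a nonzero divisible subgroup and $K$ reduced. Since $G$ is torsion, so is $D$, and the structural part of Theorem \ref{divisible} rules out any $\mathbb{Q}$-summands; hence $D \cong \bigoplus_p\bigoplus_{m_p}\mathbb{Z}(p^\infty)$, so some Pr\"ufer group $P = \mathbb{Z}(p^\infty)$ is a direct summand of $G$. Proposition \ref{summand} then guarantees that $P$ is dual automorphism-invariant.

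Next I would work entirely inside $P$. Its subgroup lattice is the chain $0 \subset \mathbb{Z}/p \subset \mathbb{Z}/p^2 \subset \cdots \subset P$, so $P$ is hollow and in particular the unique subgroup $S$ of order $p$ is small in $P$. The quotient $P/S$ is a nonzero divisible $p$-group whose socle is again cyclic of order $p$, so by the structure theorem $P/S \cong P$. Composing the canonical projection $P \to P/S$ with any such isomorphism produces an endomorphism $\varphi: P \to P$ which is surjective and has kernel exactly $S$, hence small.

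To finish, I would invoke Lemma \ref{insert1}: since $\varphi$ is an epimorphism of the dual automorphism-invariant module $P$ with small kernel, it must be an automorphism. But $\Ker(\varphi) = S \neq 0$, contradicting injectivity. Therefore $D = 0$ and $G$ is reduced. The only step requiring genuine care is the identification $P/S \cong P$, which I would justify either by comparing $p$-socles or, concretely, by presenting $P$ as $\mathbb{Z}[1/p]/\mathbb{Z}$ and observing that the map induced by division by $p$ descends to an isomorphism $P \to P/S$. Beyond this the argument is essentially a bookkeeping assembly of Theorem \ref{divisible}, Proposition \ref{summand}, and Lemma \ref{insert1}, with no serious obstacle.
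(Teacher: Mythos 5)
Your proof is correct and follows the same overall strategy as the paper: peel off a Pr\"ufer summand $\mathbb Z(p^\infty)$ via Theorem \ref{divisible}, note by Proposition \ref{summand} that it inherits dual automorphism-invariance, and derive a contradiction from the fact that all its proper subgroups are small. The endgame differs slightly: the paper picks two proper subgroups $A\subsetneq B$ of $\mathbb Z(p^\infty)$, lifts an isomorphism $H/A\to H/B$ to an automorphism $\eta$ of $H$, and contradicts $|A|<|B|$ via $\eta(A)=B$; you instead build a single epimorphism $P\to P$ with small nonzero kernel $S$ from $P/S\cong P$ and invoke Lemma \ref{insert1} directly. Your route is marginally cleaner and also more careful at the decomposition step --- the paper's assertion that a non-reduced torsion $G$ satisfies $G\cong\oplus_{m_p}\mathbb Z(p^\infty)$ glosses over the reduced complement, whereas you explicitly write $G=D\oplus K$ and only then extract the Pr\"ufer summand --- but the key idea is the same in both.
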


\begin{proof}
Assume to the contrary that $G$ is not reduced. Then in view of Theorem \ref{divisible}, we have $G\cong \oplus_{m_p} \mathbb Z(p^{\infty})$. For a prime number $p$, consider $H=\mathbb Z(p^{\infty})$. Its every proper subgroup is small. Let $A\subsetneq B$ be two proper subgroups of $H$. There exists an isomorphism $\sigma: \frac{H}{A}\rightarrow \frac{H}{B}$. Since every summand of a dual automorphism-invariant module is dual automorphism-invariant, $H$ is dual automorphism-invariant. Therefore $\sigma$ lifts to an endomorphism $\eta$ of $H$. Then $\sigma(A)=B$. This gives a contradiction as order of $A$ is less than the order of $B$. Hence $G$ is reduced.
\end{proof}

Next, we recall the characterization of quasi-projective abelian groups due to Fuchs and Rangaswamy \cite{FR}.

\begin{theorem} \label{qpabelian}
(Fuchs and Rangaswamy \cite{FR}) An abelian group $G$ is quasi-projective if and only if it is either free or a torsion group such that every $p$-component $G_p$ is a direct sum of cyclic groups of the same order $p^n$.
\end{theorem}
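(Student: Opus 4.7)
The plan is to prove both implications. The sufficient direction is a direct verification: free abelian groups are projective and hence quasi-projective, and if $G$ is torsion with each $p$-component $G_p$ a direct sum of cyclic groups of a fixed order $p^{n_p}$, then $G_p$ is a free $\mathbb Z/p^{n_p}\mathbb Z$-module. Since $p^{n_p}$ annihilates both $G_p$ and any quotient $G_p/K$, every $\mathbb Z$-linear map between such groups is automatically $\mathbb Z/p^{n_p}\mathbb Z$-linear, so $G_p$ is quasi-projective as an abelian group. For $G = \bigoplus_p G_p$, every subgroup $N$ decomposes as $\bigoplus_p (N\cap G_p)$, so $G/N = \bigoplus_p G_p/(N\cap G_p)$; since $\Hom(G_p, G_q/N_q) = 0$ for $p \neq q$, any $f : G \to G/N$ splits prime-by-prime and lifts componentwise.

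For the necessary direction, assume $G$ is quasi-projective and first show $G$ is torsion or torsion-free. Suppose not, and pick $x \in G$ of prime order $p$ and $y \in G$ of infinite order. When $x$ has finite height, \cite[Corollary 27.2]{Fuchs} produces a finite summand $H$ of $G$ with $x \in H$; writing $G = H \oplus K$ and extracting an infinite-order component $k \in K$ of $y$, one builds a homomorphism $f : G \to G/\langle pk \rangle$ that sends $x$ to the image of $k$, whose lift would force $p\psi(x) = 0$ to produce an impossible divisibility relation on the image of $k$. The infinite-height case is handled by a basic-subgroup refinement along similar lines.

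In the torsion-free case, the rank-one subcase is the crux: any $\mathbb Z \subsetneq G \subseteq \mathbb Q$ admits a non-liftable homomorphism into a Pr\"ufer component of $G/\mathbb Z$, because any lift would require denominators not present in $G$; hence $\mathbb Z$ is the only rank-one torsion-free quasi-projective group. Pontryagin's criterion, combined with the preservation of quasi-projectivity under direct summands, upgrades this to freeness of an arbitrary torsion-free quasi-projective $G$. In the torsion case, decompose $G = \bigoplus_p G_p$; each $G_p$ is quasi-projective as a summand. Boundedness of $G_p$ follows because an unbounded $p$-group admits cyclic summands of unbounded order via basic subgroup theory, each yielding a non-liftable map; then \cite[Theorem 17.2]{Fuchs} gives $G_p = \bigoplus_i \mathbb Z/p^{n_i}\mathbb Z$. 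Uniformity of the $n_i$ is established by an explicit non-lifting argument: if $m > n$ both appear with summands $\langle u \rangle$ and $\langle v \rangle$, the map $G_p \to G_p/\langle p^n u \rangle$ sending $v \mapsto u + \langle p^n u \rangle$ (zero on a complement) has no lift, since the constraint $p^n \psi(v) = 0$ would force $p^{m-n} \mid 1$, a contradiction.

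The main obstacle will be the torsion-free case: bridging rank-one rigidity to full freeness at arbitrary rank requires a Pontryagin-type argument in the infinite-rank setting, with careful handling of pure subgroups and the inheritance of quasi-projectivity. The remaining analyses, though technical, all follow a common pattern of constructing explicit non-liftable homomorphisms from the order and height data of the relevant elements.
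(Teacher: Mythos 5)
The paper does not prove this statement; it is cited verbatim from Fuchs and Rangaswamy \cite{FR}, so there is no in-paper proof to compare against. Judged on its own terms, your outline has the right architecture (reduce to torsion vs.\ torsion-free, handle each), and the torsion side is essentially sound: the boundedness argument via unbounded cyclic summands in a basic subgroup, followed by the uniform-order argument using the map $v \mapsto u + \langle p^n u\rangle$, is correct as written.

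However, the mixed case has a concrete gap. You assert you ``build a homomorphism $f : G \to G/\langle pk \rangle$ that sends $x$ to the image of $k$,'' but such an $f$ need not exist. The obstruction is at the level of the finite summand $H$: an order-$p$ element $x$ may lie in $pH$ (for instance $H \cong \mathbb Z/p^2\mathbb Z$ with $x$ the unique subgroup of order $p$), and then $\langle x\rangle$ is not pure in $H$, so there is no homomorphism from $H$ onto a cyclic group of order $p$ carrying $x$ to a generator. The construction must instead be anchored at a generator $u$ of a cyclic direct summand $\langle u\rangle \cong \mathbb Z/p^m\mathbb Z$ of $H$, mapped to the image of $k$ in $G/\langle p^m k\rangle$; then the order constraint $p^m \psi(u) = 0$ on a putative lift $\psi$ forces $p^m$ to kill a nonzero multiple of $k$, which is impossible since $k$ has infinite order. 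Separately, in the torsion-free case you should be aware that Pontryagin's criterion applies to countable groups only, so ``Pontryagin plus summands'' does not by itself upgrade rank-one rigidity to freeness at arbitrary cardinality; you correctly flag this as the obstacle, but it is not a routine patch --- it is where the real content of the Fuchs--Rangaswamy argument lives.
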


Now we are ready to prove the following for a torsion abelian group.

\begin{theorem}
Let $G$ be a torsion abelian group. Then the following are equivalent;
\begin{enumerate}[(i)]
\item $G$ is dual automorphism-invariant.
\item $G$ is quasi-projective.
\item $G$ is discrete.
\end{enumerate} 
\end{theorem}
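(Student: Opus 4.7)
The plan is to prove the cycle (ii)$\Rightarrow$(i), (i)$\Rightarrow$(ii), and (ii)$\Leftrightarrow$(iii) for torsion $G$. The implication (ii)$\Rightarrow$(i) is immediate: every quasi-projective module is pseudo-projective, hence dual automorphism-invariant by Proposition~\ref{pseudo}.

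The main effort is (i)$\Rightarrow$(ii). By Lemma~\ref{torsiondivisible}, $G$ is reduced; being torsion and reduced it splits as $G = \bigoplus_p G_p$ into $p$-primary components, and each $G_p$ is dual automorphism-invariant by Proposition~\ref{summand}. By Theorem~\ref{qpabelian}, it suffices to show each $G_p$ is a direct sum of cyclic groups of a single common order $p^{n_p}$. The crucial local obstruction is that for $a<b$ the module $M = \mathbb{Z}/p^a \oplus \mathbb{Z}/p^b$ is \emph{not} dual automorphism-invariant. With generators $u$ of order $p^a$ and $v$ of order $p^b$, take $K_2 = \langle p^a v\rangle$, small in $M$ because proper in the hollow summand $\langle v\rangle$, and define $\eta: M \to M/K_2$ by $u \mapsto v + K_2$, $v \mapsto u$. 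One checks $\eta$ is a well-defined epimorphism with kernel exactly $K_2$. Any lift $\tilde\eta \in \End(M)$ must have $\tilde\eta(u) = (1 + kp^a)v$ for some integer $k$; since $1 + kp^a$ is coprime to $p$, this element has order $p^b$, contradicting $p^a \tilde\eta(u) = \tilde\eta(p^a u) = 0$.

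Consequently, two cyclic summands of $G_p$ of different orders would yield a summand of the forbidden form, so all cyclic summands of $G_p$ share a common order $p^n$. To force $G_p$ to be bounded, I use reducedness: for $x \in G_p$ of order $p^k$, the element $p^{k-1}x$ has order $p$ and finite height at least $k-1$, so by \cite[Corollary 27.2]{Fuchs} it lies in a finite cyclic summand of $G_p$ whose order is at least $p^k$ (heights in a direct summand coincide with heights in the ambient group). An unbounded $G_p$ would thus produce cyclic summands of unboundedly large orders, contradicting the uniformity. Hence $G_p$ is bounded, and by Pr\"ufer's theorem it is a direct sum of cyclic groups all of the same order $p^{n_p}$; Theorem~\ref{qpabelian} then gives (ii).

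Finally, (ii)$\Leftrightarrow$(iii) follows by matching structural descriptions. From (ii) and Theorem~\ref{qpabelian}, $G = \bigoplus_p \bigoplus_{I_p} \mathbb{Z}/p^{n_p}$; each $p$-block is free over the local artinian ring $\mathbb{Z}/p^{n_p}$, hence discrete as a $\mathbb{Z}$-module, and since $\Hom(G_p, G_q) = 0$ for distinct primes, discreteness passes to the direct sum $G$. Conversely, a discrete torsion $G$ decomposes via $(D1)$ into hollow summands, which in the torsion abelian setting are cyclic $p$-groups; $(D2)$ together with the $\mathbb{Z}/p^a \oplus \mathbb{Z}/p^b$ obstruction above then forces homogeneity within each $p$-component, recovering the Fuchs--Rangaswamy form. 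The main anticipated obstacle is the unbounded case of (i)$\Rightarrow$(ii): bridging ``elements of arbitrarily large order'' to ``cyclic summands of arbitrarily large order'' rests on reducedness and Fuchs's pure-subgroup machinery rather than a direct pushforward of dual automorphism-invariance.
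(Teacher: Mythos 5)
Your architecture mirrors the paper's: obtain reducedness from Lemma \ref{torsiondivisible}, pass to $p$-components, show all cyclic summands of $G_p$ share a common order, then invoke Theorem \ref{qpabelian}. The way you establish the common-order step is genuinely different and more explicit: rather than the paper's route via Proposition \ref{summand} and Lemma \ref{dsdual} (local summands $A_1,A_2$ are mutually relatively projective, whence $A_1\cong A_2$ by Fuchs--Rangaswamy), you directly exhibit, for $a<b$, an epimorphism $\mathbb Z/p^a\oplus\mathbb Z/p^b\to(\mathbb Z/p^a\oplus\mathbb Z/p^b)/\langle p^a v\rangle$ with small kernel that cannot lift. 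That construction is correct and self-contained, and gives a concrete witness for why dual automorphism-invariance forces homogeneity.

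There is, however, a genuine gap in the boundedness step. You assert that for $x\in G_p$ of order $p^k$ the socle element $p^{k-1}x$ has \emph{finite} height, citing only ``reducedness.'' That is false in general: a reduced $p$-group may contain socle elements of infinite height, since the Ulm subgroup $\bigcap_n p^n G_p$ need not vanish for a reduced group. Worse, under your running hypothesis that every cyclic summand of $G_p$ has order exactly $p^n$, every finite-height socle element has height exactly $n-1$ by Fuchs, Cor.~27.2; so once $k>n$ the element $p^{k-1}x$ has height $\ge k-1\ge n$, hence \emph{infinite} height --- which is precisely the case your argument needs to handle, and where Cor.~27.2 gives you nothing. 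The standard repair uses a basic subgroup $B=\bigoplus_m B_m$ of $G_p$: each $B_m$ is pure and bounded, hence a direct summand of $G_p$ (Fuchs, Thm.~27.5), so the common-order hypothesis forces $B_m=0$ for $m\neq n$; then $B=B_n$ is pure and bounded, hence a summand, and $G_p/B$ is divisible and reduced, hence zero, giving $G_p=B$ as a homogeneous direct sum of cyclics. The paper elides this point entirely (``By above theorem, we get $G_p$ is a direct sum of copies of $A_1$''); you tried to fill the elision with a height count, but the count is incorrect as stated. The remaining items --- (ii)$\Rightarrow$(i) via Proposition \ref{pseudo}, and (ii)$\Leftrightarrow$(iii), which the paper simply cites to Mohamed--Singh --- are fine as sketches.
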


\begin{proof}
(i)$\implies$(ii). Since any abelian group is a direct sum of a divisible group and a reduced group, in view of Lemma \ref{torsiondivisible}, it follows that $G$ is reduced. Let $p$ be a prime number. Consider the $p$-component $G_p$ of $G$. Suppose $G_p\neq 0$. As $G_p$ is reduced, $G_p=A_1\oplus L$, where $A_1$ is a nonzero cyclic $p$-group. Now $o(A_1)=p^n$ for some $n>0$. If $L=0$, we get that $G_p$ is quasi-projective. Suppose $L\neq 0$. Then $L=A_2\oplus L_1$, where $A_2$ is a nonzero cyclic $p$-group. By Proposition \ref{summand}, $A_1\oplus A_2$ is dual automorphism-invariant. As every subgroup of $A_1$ or $A_2$ is small, it follows that $A_1$ is $A_2$-projective and $A_2$ is $A_1$-projective. Hence $A_1\oplus A_2$ is quasi-projective. This gives $A_1\cong A_2$. By above theorem, we get $G_p$ is a direct sum of copies of $A_1$. Hence $G_p$ is quasi-projective. This proves that $G$ itself is quasi-projective.   

\bigskip

\noindent (ii)$\implies$(i). This follows from Proposition \ref{pseudo}.

\bigskip

\noindent This shows that (i) and (ii) are equivalent. For the equivalence of (ii) and (iii), see \cite[Theorem 5.5]{MS}. 
\end{proof}

\begin{lemma} \label{torsionfree}
Let $G$ be a torsion-free, uniform abelian group which is not finitely generated. Let $H$ be a nontrivial cyclic subgroup of $G$. For any prime number $p$, let $G_p=\{x\in G: p^nx\in H$ for some $n\ge 0\}$. Then $J(G)\neq 0$ if and only if the number of prime numbers $p$ for which $G_p=H$ is finite. 
\end{lemma}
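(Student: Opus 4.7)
The plan is to exploit the fact that a torsion-free uniform abelian group has rank $1$: two $\mathbb{Z}$-linearly independent elements would generate infinite cyclic subgroups with trivial intersection, contradicting uniformity. Hence $G$ embeds in $\mathbb{Q}$, and after rescaling the embedding I may assume $H = \mathbb{Z}$ and $\mathbb{Z} \subset G \subset \mathbb{Q}$. For each prime $p$ set $h_p := \sup\{n \geq 0 : p^{-n} \in G\} \in \{0, 1, \ldots, \infty\}$; the classification of rank-one torsion-free groups then realizes $G$ as $\{a/b \in \mathbb{Q} : v_p(b) \leq h_p \text{ for every prime } p\}$, where $v_p$ denotes the $p$-adic valuation. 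A direct check identifies $G_p = H$ with $h_p = 0$, because $p^n x \in \mathbb{Z}$ forces $x$ to have denominator a power of $p$ in lowest terms, and such an $x \notin \mathbb{Z}$ occurs in $G$ precisely when $h_p \geq 1$.

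Next I would compute $J(G)$ as the intersection of its maximal subgroups. Since $G$ has rank $1$, any maximal subgroup $M$ must satisfy $G/M \cong \mathbb{Z}/p\mathbb{Z}$ for some prime $p$ with $pG \subsetneq G$, i.e., $h_p < \infty$; and as $G/pG$ is then one-dimensional over $\mathbb{Z}/p\mathbb{Z}$, the maximal subgroup $M = pG$ is the unique such one for each such $p$. A short valuation computation yields $z \in pG$ iff $v_p(z) \geq 1 - h_p$, hence
\[
J(G) \;=\; \bigcap_{p\,:\,h_p<\infty} pG \;=\; \{\, z \in G : v_p(z) \geq 1 - h_p \text{ for every } p \text{ with } h_p < \infty \,\}.
\]

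Writing $F := \{p : h_p = 0\}$, both directions now drop out. If $F$ is finite, put $z := \prod_{p \in F} p \in H \subset G$ (empty product $=1$); then $v_p(z) \geq 1$ for $p \in F$, while $v_p(z) = 0$ and $h_p \geq 1$ for $p \notin F$, so $v_p(z) \geq 1 - h_p$ holds at every prime, giving $0 \neq z \in J(G)$. Conversely, if $F$ is infinite, take any $0 \neq x = a/b \in G$ in lowest terms: since $b$ only involves primes with $h_p \geq 1$, the numerator $a$ has finitely many prime divisors, so some $p \in F$ satisfies $p \nmid a$. Then $v_p(x) = 0$ while $1 - h_p = 1$, so $x \notin pG$, and therefore $x \notin J(G)$, giving $J(G) = 0$. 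The main technical points to verify are the height parameterization of rank-one subgroups of $\mathbb{Q}$ and the reconciliation of the paper's definition of $J(G)$ as a sum of small submodules with the intersection-of-maximal-subgroups description used above; both are standard, but are the only places where care is really required.
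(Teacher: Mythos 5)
Your proof is correct, and it takes a genuinely different, more computational route than the paper. The paper works directly with an arbitrary maximal subgroup $M$ of $G$: it fixes $H=\mathbb{Z}\subseteq G\subseteq\mathbb{Q}$, notes that $G/M$ has prime order $p$, and splits into the cases $\mathbb{Z}\subsetneq G_p$ (forcing $\mathbb{Z}\subseteq M$) and $\mathbb{Z}=G_p$ (forcing $M\cap\mathbb{Z}=p\mathbb{Z}$), then concludes by intersecting the $p\mathbb{Z}$'s and using that $\mathbb{Z}$ is essential in $G$. You instead invoke the height parameterization $h_p=\sup\{n:p^{-n}\in G\}$ of rank-one subgroups of $\mathbb{Q}$, show $pG$ is the unique maximal subgroup with $p$-quotient whenever $h_p<\infty$, and derive the clean closed formula $J(G)=\bigcap_{h_p<\infty}pG=\{z\in G:v_p(z)\ge 1-h_p\text{ for all }p\text{ with }h_p<\infty\}$, from which both directions drop out by inspecting the element $\prod_{p\in F}p$ (resp.\ a numerator-divisibility count). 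Your approach is more explicit and produces a usable description of $J(G)$, at the modest cost of importing the classification of rank-one torsion-free groups and the identity $\mathrm{Rad}(M)=\sum\{\text{small submodules}\}=\bigcap\{\text{maximal submodules}\}$; you correctly flag both of these as the only places requiring outside input, and both are standard (the latter is, e.g., Anderson--Fuller, and is needed implicitly in the paper's proof too). One small point worth making explicit in your write-up: to see $pG$ is maximal, you should justify $[G:pG]=p$ when $pG\ne G$, which for a subgroup of $\mathbb{Q}$ follows by picking $x\in G\setminus pG$, writing any $y\in G$ together with $x$ inside a common cyclic subgroup $\mathbb{Z}z$, and using B\'ezout on the coefficient of $x$ over $z$.
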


\begin{proof}
Observe that $H\subseteq G_p$ for any prime number $p$. Without loss of generality we take $G\subseteq \mathbb Q$ and $H=\mathbb Z$. Let $M$ be a maximal subgroup of $G$. For some prime number $p$, $G/M$ is of order $p$. Thus $pG_p\subseteq M$. Now $G_p$ is generated by some powers $\frac{1}{p^n}$, $n\ge 0$.  

\bigskip

\noindent Case $1$. Assume $\mathbb Z \subset G_p$. Then $\mathbb Z\subseteq pG_p \subseteq M$, $M/{\mathbb Z}$ is a maximal subgroup of $G/{\mathbb Z}$. As $G/{\mathbb Z}$ is a torsion group such that for each prime number $q$, $G_{q}/{\mathbb Z}$ is the $q$-torsion component of $G/{\mathbb Z}$, we get $G_q\subseteq M$, whenever $q\neq p$. Then $M=(G_p\cap M) + A_p$, where $A_p$ is the sum of all $G_q$, $q\neq p$.

\bigskip

\noindent Case $2$. Assume $\mathbb Z=G_p$. If $\mathbb Z\subseteq M$, the arguments of Case $1$ show that $M=G$, which is a contradiction. Thus $\mathbb Z\not\subseteq M$, and we get $M\cap \mathbb Z=p \mathbb Z$. 

\bigskip

\noindent We know that the intersection of infinitely many sets $p\mathbb Z$ is zero. Thus it follows that $J(G)\neq 0$ if and only if the number of primes $p$ for which $G_p=\mathbb Z$ is finite.
\end{proof}

\begin{theorem} \label{rad}
Let $G$ be a subgroup of $\mathbb Q$ containing $\mathbb Z$. Then the following conditions are equivalent;
\begin{enumerate}[(i)]
\item $G$ is dual automorphism-invariant.
\item The number of primes $p$ for which $G_p=\{x\in G: p^nx\in \mathbb Z\}=\mathbb Z$ is not finite.
\item $J(G)=0$.
\end{enumerate}
\end{theorem}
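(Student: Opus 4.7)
The plan is to deduce (ii)$\Leftrightarrow$(iii) from Lemma \ref{torsionfree} applied with $H=\mathbb{Z}$ (using that $G$ has rank $1$, so $J(G)=0$ iff $J(G)\cap\mathbb{Z}=0$), disposing of the trivial finitely generated case $G\cong\mathbb{Z}$ separately. The direction (iii)$\Rightarrow$(i) is immediate from the Example in Section 1: $J(G)=0$ forces every small submodule to be zero, and a module with no nonzero small submodule is automatically dual automorphism-invariant. The substantive direction is the contrapositive of (i)$\Rightarrow$(iii).

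Suppose $J(G)\neq 0$. I would parametrize $G$ by heights: for each prime $p$, let $h_p\in\{0,1,\ldots,\infty\}$ be the $p$-height of $1$, so that $G=\{x\in\mathbb{Q}:v_p(x)\geq -h_p\text{ for all }p\}$, and by Lemma \ref{torsionfree} the set $S=\{p:h_p=0\}$ is finite. Put $N=\prod_{p\in S}p$ (with $N=1$ when $S=\emptyset$). I would first check that $N\mathbb{Z}$ is a nonzero small submodule of $G$: if $N\mathbb{Z}+L=G$ with $L$ proper, then $G/L=(N\mathbb{Z}+L)/L$ is a nonzero cyclic quotient; since $\{p:h_p\geq 1\}$ is cofinite one shows $\operatorname{Hom}(G,\mathbb{Z})=0$, so $G/L\cong\mathbb{Z}/m$ with every prime factor of $m$ lying in $S$, whence $\gcd(N,m)>1$ and $N$ cannot generate $\mathbb{Z}/m$.

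Next I would compute the primary decomposition
\[
G/N\mathbb{Z} \;=\; \bigoplus_{\ell}(G/N\mathbb{Z})_\ell,\qquad
(G/N\mathbb{Z})_\ell\cong
\begin{cases}
\mathbb{Z}/\ell & \ell\in S,\\
\mathbb{Z}/\ell^{h_\ell} & 1\leq h_\ell<\infty,\\
\mathbb{Z}(\ell^\infty) & h_\ell=\infty,
\end{cases}
\]
and observe that any endomorphism of $G$ is multiplication by some $r\in\operatorname{End}(G)=\{s\in\mathbb{Q}:v_p(s)\geq 0\text{ whenever }h_p<\infty\}$, with the requirement $rN\mathbb{Z}\subseteq N\mathbb{Z}$ pinning $r$ down to $\mathbb{Z}$ via a direct valuation check. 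Hence any lift of an endomorphism of $G/N\mathbb{Z}$ is multiplication by an integer, acting on each primary component in the obvious way.

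Finally, I would construct the failure: pick a prime $\ell_0$ with $h_{\ell_0}\geq 1$ and $\operatorname{Aut}((G/N\mathbb{Z})_{\ell_0})\neq 1$ (always available because $\{\ell:h_\ell\geq 1\}$ is cofinite while the only summand with trivial automorphism group is $\mathbb{Z}/2$, from $(\ell,h_\ell)=(2,1)$), and let $\eta$ be the automorphism of $G/N\mathbb{Z}$ equal to a chosen nontrivial automorphism on the $\ell_0$-primary component and the identity on every other primary component. An integer $r$ lifting $\eta$ must satisfy $r\equiv 1\pmod{\ell}$ for infinitely many primes $\ell$ (namely all $\ell\neq\ell_0$ with $h_\ell\geq 1$), forcing $r=1$, which then contradicts non-triviality of $\eta$ on the $\ell_0$-component. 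Thus $\eta$ does not lift, and $G$ fails to be dual automorphism-invariant. The main obstacle is the bookkeeping of the primary decomposition together with the identification $\operatorname{End}(G)\cap(1/N)\mathbb{Z}=\mathbb{Z}$; the lever converting ``$S$ finite'' into a concrete obstruction is the elementary fact that a nonzero integer is congruent to $1$ modulo only finitely many primes.
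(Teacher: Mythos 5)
Your proof is correct, and for the substantive direction $\neg$(iii) $\Rightarrow\neg$(i) it takes a genuinely different route from the paper. The paper argues non-constructively: writing $G/\mathbb Z$ as an infinite direct sum of its $p$-primary components and noting that $\operatorname{Aut}(G/\mathbb Z)$ is therefore uncountable while $\operatorname{End}(G)\subseteq\mathbb Q$ is countable, so some automorphism cannot lift. You instead construct an explicit non-liftable automorphism: after identifying $N\mathbb Z$ (with $N=\prod_{p\in S}p$) as a nonzero small submodule, you compute the primary decomposition of $G/N\mathbb Z$, pin down the eligible lifts to multiplication by integers, and exhibit an automorphism that is the identity on every primary component but one; the congruences $r\equiv 1\pmod\ell$ for infinitely many $\ell$ then force $r=1$. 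Each approach has merit: the paper's counting argument is shorter, while yours is constructive and makes the obstruction visible. One point in your favor worth emphasizing: the paper asserts ``we can find a cyclic subgroup $H$ that is small. We take $H=\mathbb Z$,'' but $\mathbb Z$ itself need not be small in $G$ --- for instance if $h_2=0$ and $h_p=1$ for every odd prime $p$, then $2G$ is a proper maximal subgroup with $\mathbb Z+2G=G$. Your choice of $N\mathbb Z$ (equivalently, working with the actual radical, or rescaling $G$ by $1/N$ so that the relevant small cyclic subgroup becomes $\mathbb Z$) is the correct repair, and your verification that $N\mathbb Z$ is small via the $\operatorname{Hom}(G,\mathbb Z)=0$ and prime-factor argument is sound, as is the identification $G=\{x\in\mathbb Q:v_p(x)\ge -h_p\ \forall p\}$.
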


\begin{proof}
(i)$\implies$(ii). Let $G$ be a subgroup of $\mathbb Q$ containing $\mathbb Z$ and suppose $G$ is dual automorphism-invariant. Assume to the contrary that the number of primes $p$ for which $G_p=\{x\in G: p^nx\in \mathbb Z\}=\mathbb Z$ is not finite. Then by Lemma \ref{torsionfree}, $J(G)\neq 0$. Therefore we can find a cyclic subgroup $H$ that is small. We take $H=\mathbb Z$. By using the Lemma \ref{torsionfree}, we see that $G/{\mathbb Z}$ is an infinite direct sum of its $p$-components. For any prime number $p\neq 2$ for which the $p$-component $G_p/{\mathbb Z}$ is nonzero, its group of automorphisms is of order more than one. This proves that $Aut(G/\mathbb Z)$ is uncountable. As $\mathbb Q$ is countable, it follows that some automorphism of $G/{\mathbb Z}$ cannot be lifted to endomorphism of $G$. Hence $G$ is not dual automorphism-invariant, which is a contradiction. This proves that the number of primes $p$ for which $G_p=\{x\in G: p^nx\in \mathbb Z\}=\mathbb Z$ is not finite.

\bigskip

\noindent (ii)$\implies$(iii). It follows from Lemma \ref{torsionfree}.

\bigskip

\noindent (iii)$\implies$(i) is trivial.   
\end{proof}

\begin{corollary} \label{torsionfreedivisible}
If a torsion-free abelian group $G$ is dual automorphism-invariant, then it is reduced.
\end{corollary}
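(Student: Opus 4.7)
The plan is to argue by contradiction, reducing to the case $G=\mathbb{Q}$ and then invoking Theorem~\ref{rad}. Suppose $G$ is torsion-free and not reduced. By Theorem~\ref{divisible}, the maximal divisible subgroup $D$ of $G$ is nonzero and splits off, so $G=D\oplus K$ for some reduced $K$. Since $G$ is torsion-free, so is $D$, and by the structural statement in Theorem~\ref{divisible} the torsion-free divisible group $D$ must be a direct sum of copies of $\mathbb{Q}$. In particular, $D\cong \mathbb{Q}\oplus D'$ for some subgroup $D'$, hence $\mathbb{Q}$ is a direct summand of $G$.

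Next, I would invoke Proposition~\ref{summand}: since $G$ is dual automorphism-invariant and $\mathbb{Q}$ is a direct summand of $G$, the group $\mathbb{Q}$ is itself dual automorphism-invariant. This reduces the problem to showing that $\mathbb{Q}$ fails to be dual automorphism-invariant.

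Finally, I would apply Theorem~\ref{rad} to $\mathbb{Q}$, viewed trivially as a subgroup of $\mathbb{Q}$ containing $\mathbb{Z}$. For every prime $p$, the subgroup $\mathbb{Q}_p=\{x\in\mathbb{Q}:p^n x\in\mathbb{Z}\text{ for some }n\ge 0\}=\mathbb{Z}[1/p]$ strictly contains $\mathbb{Z}$, so there are no primes $p$ with $\mathbb{Q}_p=\mathbb{Z}$; in particular this set is finite. By the equivalence (i)$\Leftrightarrow$(ii) in Theorem~\ref{rad}, $\mathbb{Q}$ is not dual automorphism-invariant, contradicting the previous step. Hence $G$ is reduced.

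The argument is essentially bookkeeping on the structure theorem plus an application of the already-proven results, so I do not expect a serious obstacle. The one point that needs to be stated cleanly is the fact that $D$ splits off as a summand of $G$ (which follows from injectivity of divisible abelian groups) and the fact that a single copy of $\mathbb{Q}$ then splits off from $D$; both are standard.
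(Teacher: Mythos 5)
Your proof is correct and follows essentially the same route as the paper: extract a copy of $\mathbb{Q}$ as a direct summand, use Proposition~\ref{summand} to conclude that $\mathbb{Q}$ would then be dual automorphism-invariant, and derive a contradiction from Theorem~\ref{rad} (for $\mathbb{Q}$ the set of primes $p$ with $\mathbb{Q}_p=\mathbb{Z}$ is empty, hence finite). Your writeup is in fact slightly more careful than the published proof, which passes directly from \emph{$G$ not reduced} to $G\cong\oplus_n\mathbb{Q}$; your intermediate decomposition $G=D\oplus K$ with $D$ a nonzero torsion-free divisible summand is the correct justification for peeling off a single copy of $\mathbb{Q}$.
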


\begin{proof}
Let $G$ be a torsion-free dual automorphism-invariant abelian group. Assume that $G$ is not reduced. Then $G\cong \oplus_n \mathbb Q$. As $\mathbb Q$ is a summand of $G$, it must be dual automorphism-invariant by Proposition \ref{summand}. However, we know that $\mathbb Q$ is not dual automorphism-invariant (see Theorem \ref{rad}). This yields a contradiction. Hence $G$ is reduced.
\end{proof}

\noindent From Theorem \ref{divisible}, Lemma \ref{torsiondivisible} and Corollary \ref{torsionfreedivisible}, we conclude the following

\begin{theorem}
Let $G$ be a dual automorphism-invariant abelian group. Then $G$ must be reduced.
\end{theorem}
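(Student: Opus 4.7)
The plan is to assemble the three preceding structural results: the decomposition $G = D \oplus K$ with $D$ divisible and $K$ reduced (Theorem~\ref{divisible}), the fact that the torsion divisible and torsion-free divisible parts cannot occur (Lemma~\ref{torsiondivisible} and Corollary~\ref{torsionfreedivisible}), and stability of the property under direct summands (Proposition~\ref{summand}). The goal reduces to proving that the divisible summand $D$ is zero.

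First I would invoke Theorem~\ref{divisible} to write $G = D \oplus K$ with $D$ divisible and $K$ reduced, so that showing $G$ reduced amounts to showing $D = 0$. Since $D$ is a direct summand of the dual automorphism-invariant group $G$, Proposition~\ref{summand} gives that $D$ is itself dual automorphism-invariant. By the structural description in Theorem~\ref{divisible}, $D$ further decomposes as $D \cong T \oplus F$, where $T \cong \oplus_{m_p}\mathbb{Z}(p^\infty)$ is torsion divisible and $F \cong \oplus_n \mathbb{Q}$ is torsion-free divisible; both $T$ and $F$ are direct summands of $D$, and therefore of $G$.

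Next I would handle $T$ and $F$ separately. Applying Proposition~\ref{summand} again, $T$ is a dual automorphism-invariant torsion abelian group, so Lemma~\ref{torsiondivisible} forces $T$ to be reduced; combined with its divisibility, this yields $T = 0$. Likewise $F$ is a dual automorphism-invariant torsion-free abelian group, so Corollary~\ref{torsionfreedivisible} forces $F$ to be reduced; being divisible as well, $F = 0$. Hence $D = T \oplus F = 0$, and $G = K$ is reduced.

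There is essentially no real obstacle: all the serious work has already been done in Lemma~\ref{torsiondivisible} (where the structure of $\mathbb{Z}(p^\infty)$ and an order argument provide the contradiction) and in Corollary~\ref{torsionfreedivisible} (which uses Theorem~\ref{rad} to rule out $\mathbb{Q}$ as a summand). The only thing to be careful about is using Proposition~\ref{summand} twice, to first pass from $G$ to its divisible summand $D$ and then to split off the torsion and torsion-free divisible parts of $D$ as summands so that the two preceding results apply individually.
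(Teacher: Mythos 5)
Your proof is correct and follows exactly the route the paper indicates: decompose $G = D \oplus K$ via Theorem~\ref{divisible}, split $D$ into its torsion divisible and torsion-free divisible summands, and apply Proposition~\ref{summand} together with Lemma~\ref{torsiondivisible} and Corollary~\ref{torsionfreedivisible} to force both pieces to vanish. This matches what the paper means by ``From Theorem~\ref{divisible}, Lemma~\ref{torsiondivisible} and Corollary~\ref{torsionfreedivisible}, we conclude the following.''
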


\bigskip

\section{Dual automorphism-invariant modules over right perfect rings} 

\noindent Bass \cite{Bass} defined a {\it projective cover} of a module $A$ to be an epimorphism $\mu:P\rightarrow A$ such that $P$ is a projective module and $\Ker(\mu)$ is a small submodule of $P$. Thus modules having projective covers are, up to isomorphism, of the form $P/K$, where $P$ is a projective module and $K$ is a small submodule of $P$. A ring $R$ is said to be a {\it right perfect ring} if every right $R$-module has a projective cover. 

Next, we proceed to provide an equivalent characterization for a module with projective cover to be a dual automorphism-invariant module. We begin with a lemma which will be used at several places throughout this paper.

\begin{lemma} \label{map1}
Let $A, B$ be right $R$-modules and let $C$ be a small submodule of $A$. Let $f:A\rightarrow B$, and $g:A\rightarrow B$ be homomorphisms such that $g(C)=0$. Consider induced homomorphisms $f':A\rightarrow B/f(C)$ and $g':A\rightarrow B/f(C)$. If $f'=g'$, then $f=g$.  
\end{lemma}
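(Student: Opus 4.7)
The plan is to exploit the smallness of $C$ to force the difference $f - g$ to vanish. Setting $h = f - g : A \to B$, the assumption $f' = g'$ says exactly that $\pi \circ h = 0$, where $\pi : B \to B/f(C)$ is the natural projection. Hence the first step is to record that $h(A) \subseteq f(C)$.

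The second step uses the hypothesis $g(C) = 0$ to pin down the image of $h$ on the small submodule: since $h(C) = f(C) - g(C) = f(C)$, we get $h(C) = f(C) \supseteq h(A)$, and combined with $h(C) \subseteq h(A)$ this gives $h(C) = h(A)$. In particular, every $a \in A$ admits some $c \in C$ with $h(a) = h(c)$, so $a - c \in \ker(h)$, and therefore $A = C + \ker(h)$.

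The third step is the punchline: because $C$ is small in $A$, the equality $A = C + \ker(h)$ forces $\ker(h) = A$, so $h \equiv 0$ and $f = g$. I do not anticipate any real obstacle here; the only thing to be careful about is the interpretation of the ``induced'' maps $f'$ and $g'$ as the compositions with the single quotient map $\pi : B \to B/f(C)$ (note $g'$ is well-defined without any compatibility condition on $g$ and $f(C)$, since it is just $\pi \circ g$).
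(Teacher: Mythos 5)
Your argument is correct and matches the paper's proof essentially step for step: both set $h = f-g$, observe that $f'=g'$ gives $h(A)\subseteq f(C)$, use $g(C)=0$ to identify $f(C)$ with $h(C)$, deduce $A = C + \ker(h)$, and then invoke smallness of $C$ to conclude $\ker(h)=A$. There is no substantive difference.
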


\begin{proof}
Let $\pi:B\rightarrow B/f(C)$ be the natural projection. Then $f'=\pi f$ and $g'=\pi g$. Now, since $f'=g'$, we have for each $x\in A$, $f'(x)=g'(x)$. Thus $\pi f(x)=\pi g(x)$ for each $x\in A$. This gives $f(x)+f(C)=g(x)+f(C)$ for each $x\in A$. So, $(f-g)(x)\in f(C)=(f-g)(C)$ for each $x\in A$. Therefore, $(f-g)(A)\subseteq (f-g)(C)$ and hence $A\subseteq C+\Ker(f-g)$. Now, since $C$ is a small submodule of $A$, we get $A=\Ker(f-g)$. Thus, $f-g=0$ and hence $f=g$. 
\end{proof}

\begin{lemma} \label{Anupam1}
Let $M$ be any right $R$-module and $L_1, L_2$ be two small submodules of $M$. Let $\sigma: \frac{M}{L_1} \rightarrow \frac{M}{L_2}$ be an epimorphism and $\eta: M\rightarrow M$ be a lifting of $\sigma$. Then
\begin{enumerate}[(i)]
\item $\eta$ is an epimorphism.
\item If $\sigma$ is an isomorphism and $\Ker(\eta)$ is a summand of $M$, then $\eta$ is an automorphism.
\end{enumerate}
\end{lemma}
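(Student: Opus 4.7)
The plan is to work directly from the definition of a lifting, which says $\pi_2 \eta = \sigma \pi_1$, where $\pi_i \colon M \to M/L_i$ are the canonical projections. Both parts are then short smallness arguments in the spirit of the proofs of Lemma \ref{insert1} and Corollary \ref{insert2}. Lemma \ref{map1} does not seem to be needed.

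For part (i), I would observe that the composition $\sigma \pi_1$ is an epimorphism since both factors are, and it equals $\pi_2 \eta$. Applying $\pi_2^{-1}$ to the image of $\eta$ gives $\eta(M) + L_2 = M$, and since $L_2 \subset_s M$ this forces $\eta(M) = M$.

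For part (ii), the key claim is $\Ker(\eta) \subseteq L_1$. Indeed, if $x \in \Ker(\eta)$ then $0 = \pi_2(\eta(x)) = \sigma(\pi_1(x))$; because $\sigma$ is an isomorphism (here is where injectivity is used, not merely surjectivity), we conclude $\pi_1(x) = 0$, i.e., $x \in L_1$. Hence $\Ker(\eta)$ is contained in the small submodule $L_1$ and is therefore itself small in $M$. The additional hypothesis that $\Ker(\eta)$ is a direct summand, say $M = \Ker(\eta) \oplus K'$, combined with smallness of $\Ker(\eta)$, forces $K' = M$ and so $\Ker(\eta) = 0$. Together with part (i) this gives that $\eta$ is an automorphism.

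There is no real obstacle; the proof is essentially bookkeeping with the identity $\pi_2 \eta = \sigma \pi_1$ together with the two standard facts that a submodule of a small submodule is small and that a small direct summand must be zero.
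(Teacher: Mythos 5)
Your proof is correct and takes essentially the same approach as the paper's: part (i) from $\eta(M)+L_2=M$ with $L_2$ small, and part (ii) from $\Ker(\eta)\subseteq L_1$ (hence small) plus the summand hypothesis. You merely spell out the brief step, left implicit in the paper, that injectivity of $\sigma$ forces $\Ker(\eta)\subseteq L_1$.
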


\begin{proof}
\begin{enumerate}[(i)]
\item The hypothesis gives $\eta(M)+L_2=M$. Since $L_2\subset_{s} M$, we have $\eta(M)=M$. Hence $\eta$ is an epimorphism.

\item The hypothesis gives that $\Ker(\eta) \subseteq L_1$. Therefore $\Ker(\eta)\subset_{s} M$. Also, by the hypothesis, $\Ker(\eta)$ is a summand of $M$. Thus $\Ker(\eta)=0$ and hence $\eta$ is an automorphism.

\end{enumerate}
\end{proof}

\noindent Now we are ready to prove the following 

\begin{theorem} \label{char}
Let $P$ be a projective module and $K\subset_{s}P$. Then $M=\frac{P}{K}$ is dual automorphism-invariant if and only if $\sigma(K)=K$ for any automorphism $\sigma$ of $P$. \end{theorem}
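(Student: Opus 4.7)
The plan is to prove both directions of the biconditional, with the easy converse first and the more delicate forward direction last.

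For the converse, assume $\sigma(K)=K$ for every $\sigma\in\operatorname{Aut}(P)$. Any small submodule of $M=P/K$ has the form $N/K$ with $N$ small in $P$ containing $K$. Given small $L_1=N_1/K$, $L_2=N_2/K$ and an epimorphism $\eta\colon M/L_1\to M/L_2$ with small kernel—equivalently an epimorphism $P/N_1\to P/N_2$ with small kernel—projectivity of $P$ lifts $\eta\circ\pi_{N_1}\colon P\to P/N_2$ to a map $\tilde\eta\colon P\to P$. Smallness of $N_2$ makes $\tilde\eta$ surjective, and $\ker\tilde\eta$ is a direct summand of $P$ contained in a small submodule, hence zero, so $\tilde\eta\in\operatorname{Aut}(P)$. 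The hypothesis forces $\tilde\eta(K)=K$, and the induced automorphism of $M$ lifts $\eta$.

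For the forward direction, fix $\sigma\in\operatorname{Aut}(P)$ and set $K'=\sigma(K)$, $L=K+K'$, both small in $P$. The isomorphism $\bar\sigma\colon P/K\to P/K'$ followed by the natural projection $P/K'\to P/L$ defines an epimorphism $\alpha\colon M\to M/(L/K)$, $\alpha(p+K)=\sigma(p)+L$, whose kernel $(K+\sigma^{-1}(K))/K$ is small. Applying dual automorphism-invariance with $K_1=0$, $K_2=L/K$ and invoking Corollary \ref{insert2}, I obtain $\varphi\in\operatorname{Aut}(M)$ with $\pi_{L/K}\varphi=\alpha$. Projectivity of $P$ lifts $\varphi$ to $\tilde\varphi\in\operatorname{End}(P)$ with $\pi_K\tilde\varphi=\varphi\pi_K$; the same smallness/summand argument as in the converse shows $\tilde\varphi\in\operatorname{Aut}(P)$ with $\tilde\varphi(K)=K$, and a diagram chase through $\pi_{L/K}\pi_K=\pi_L$ yields the congruence $\tilde\varphi(p)\equiv\sigma(p)\pmod L$ for every $p\in P$.

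The decisive step is to upgrade this mod-$L$ congruence to a mod-$K$ congruence, and this is where Lemma \ref{map1} is used. Apply it with $A=P$, $B=M$, $C=K$, $f=\pi_K\sigma$, $g=\pi_K\tilde\varphi$. Since $\tilde\varphi(K)=K$ one has $g(K)=0$; the image $f(K)=L/K$ makes $B/f(C)=M/(L/K)=P/L$, and the induced maps $f',g'\colon P\to P/L$ send $p$ to $\sigma(p)+L$ and $\tilde\varphi(p)+L$ respectively, which coincide by the congruence just established. Lemma \ref{map1} then gives $f=g$, i.e., $\sigma(p)-\tilde\varphi(p)\in K$ for all $p\in P$. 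Taking $p\in K$ and using $\tilde\varphi(k)\in K$ forces $\sigma(K)\subseteq K$; repeating the whole argument with $\sigma^{-1}$ in place of $\sigma$ yields the reverse inclusion and hence $\sigma(K)=K$.

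The main obstacle is precisely the bootstrap in the last paragraph: dual automorphism-invariance alone produces only the weak relation $\sigma\equiv\tilde\varphi\pmod L$, which by itself gives merely $\sigma(K)\subseteq L=K+\sigma(K)$, a tautology. Lemma \ref{map1} is tailor-made to close this gap by converting mod-$f(C)$ agreement of two maps into literal equality whenever $g$ annihilates $C$, and it is the nontrivial ingredient that makes the implication go through.
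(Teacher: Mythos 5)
Your proof is correct and follows essentially the same strategy as the paper: construct the epimorphism $M\to M/\big((K+\sigma(K))/K\big)$ induced by $\sigma$, lift it to an automorphism of $M$ via Corollary~\ref{insert2}, lift that further to an automorphism of $P$, and then use Lemma~\ref{map1} to upgrade mod-$(K+\sigma(K))$ agreement of two maps into $P/K$ to mod-$K$ agreement, forcing $\sigma(K)\subseteq K$ and (by symmetry) equality. Your choice of parameters in Lemma~\ref{map1} ($C=K$, $f=\pi_K\sigma$, $g=\pi_K\tilde\varphi$) is a slightly cleaner packaging than the paper's ($C=\sigma^{-1}(K)$, $f=\pi_K\lambda$, $g=\pi_K\sigma$), but the underlying mechanism is identical.
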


\begin{proof}
Let $M=\frac{P}{K}$ be a dual automorphism invariant module. Let $\sigma:P\rightarrow P$ be an automorphism. The map $\sigma$ induces an epimorphism $\bar{\sigma}: \frac{P}{K} \rightarrow \frac{P}{K+\sigma(K)}$ given by $\bar{\sigma}(x+K)=\sigma(x)+K+\sigma(K)$. As $\Ker(\bar{\sigma})=\frac{\sigma^{-1}(K)+K}{K}$ is small in $M=P/K$, $\bar{\sigma}$ lifts to an automorphism $\eta$ of $M$ and $\eta^{-1}(\frac{K+\sigma(K)}{K})=\frac{\sigma^{-1}(K)+K}{K}$. Now $\eta$ lifts to an endomorphism $\lambda$ of $P$. By Lemma \ref{Anupam1}, $\lambda$ is an automorphism of $P$. Then $\lambda(K) \subseteq K$. If $K\subsetneq \lambda^{-1}(K)$, the mapping $\eta$ which is induced by $\lambda$ cannot be an automorphism. Hence $\lambda(K)=K$. As $\eta(\frac{\sigma^{-1}(K)+K}{K})=\frac{K+\sigma(K)}{K}$, we get $\lambda(\sigma^{-1}(K)+K)=K+\sigma(K)$. Let $C=\sigma^{-1}(K)$. Now $C\subset_{s} P$.     
We have two mappings  $\bar{\lambda}$ and $\bar{\mu}$ given as follows;
\[\bar{\lambda}:P\rightarrow P/K\] such that $\bar{\lambda}(x)=\lambda(x)+K$, and
\[\bar{\mu}:P\rightarrow P/K\] such that $\bar{\mu}(x)=\sigma(x)+K$.

Clearly $\bar{\mu}(C)=0$. Now $\eta(\frac{\sigma^{-1}(K)+K}{K})=\frac{\lambda(\sigma^{-1}(K))+\lambda(K)}{K}=\frac{\lambda(\sigma^{-1}(K))+K}{K}=\frac{\lambda(C)+K}{K}$. Hence $\bar{\lambda}(C)=\frac{\sigma(K)+K}{K}$. For $\bar{P}=\frac{P}{K}$, we can take $\frac{\bar{P}}{\bar{\lambda}(C)}=\frac{P}{\sigma(K)+K}$.  
Let $\pi:\frac{P}{K}\rightarrow \frac{P}{K+\sigma(K)}$ be a natural mapping. Set $\bar{\lambda}'=\pi \bar{\lambda}$, $\bar{\mu}'=\pi \bar{\mu}$. Let $x\in P$. Then $\bar{\lambda'}(x)=\pi(\lambda(x)+K)=\pi \eta (x+K)$. Now $\eta(x+K)=y+K$ for some $y\in P$. Thus $\bar{\lambda'}(x)=y+\sigma(K)+K=\bar{\sigma}(x+K)=\sigma(x)+\sigma(K)+K=\bar{\mu'}(x)$. Hence $\bar{\lambda'}=\bar{\mu'}$.  By Lemma \ref{map1}, we conclude that $\bar{\lambda}=\bar{\mu}$. This gives $\bar{\mu}(K)=\bar{\lambda}(K)=\bar{0}$, as $\lambda(K)=K$, we get $\frac{\sigma(K)+K}{K}=\bar{0}$. Hence $\sigma(K)\subseteq K$. By considering $\sigma^{-1}$, we get $\sigma^{-1}(K)\subseteq K$, therefore $K\subseteq \sigma(K)$. Hence $\sigma(K)=K$. 

Conversely, let $\sigma(K)=K$ for any automorphism $\sigma$ of $P$. Let $\overline{L_1}=\frac{L_1}{K}, \overline{L_2}=\frac{L_2}{K}$ be two small submodules of $M$ and $\sigma: \frac{M}{\overline{L_1}}\rightarrow \frac{M}{\overline{L_2}}$ be an epimorphism with $\Ker(\sigma) \subset_{s} \frac{M}{\overline{L_1}}$. Now $\Ker(\sigma)=\frac{\overline{L}}{\overline{L_1}}$, where $L$ is some submodule of $P$ containing $K$. Then $\bar{L}\subset_{s} M$ and hence $L\subset_s P$. Now $\sigma$ induces an epimorphism $\sigma': \frac{P}{L_1}\rightarrow \frac{P}{L_2}$ such that for any $x\in P$, $\sigma'(x+L_1)=y+L_2$ if and only if $\sigma(\bar{x}+\overline{L_1})=\bar{y}+\overline{L_2}$. Now $\Ker(\sigma')=\frac{L}{L_1}\subset_{s} \frac{P}{L_1}$, and $\sigma'$ is an epimorphism. It lifts to an endomorphism $\eta$ of $P$. Then $\Ker(\eta)\subseteq L$, and therefore $\Ker(\eta)\subset_s P$. The above lemma gives that $\eta$ is an automorphism of $P$. By the hypothesis, $\eta(K)=K$. Hence $\eta$ induces an automorphism $\bar{\eta}:M\rightarrow M$. This $\bar{\eta}$ lifts $\sigma$. Hence $M$ is dual automorphism-invariant.    
\end{proof}

\noindent We have already seen that if $M$ is a supplemented dual automorphism-invariant module, then $M$ satisfies the property ($D3$). Since every module over a right perfect ring is supplemented, it follows that every dual automorphism-invariant module over a right perfect ring satisfies the property ($D3$).

Now, for a lifting module over a right perfect ring, we have the following

\begin{proposition} \label{discrete}
Let $R$ be a right perfect ring and let $M$ be a right $R$-module such that $M$ is lifting. If $M$ is a dual automorphism-invariant module, then $M$ is discrete.
\end{proposition}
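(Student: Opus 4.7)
The plan is to establish $(D2)$, since together with the given $(D1)$ this yields discreteness. Fix $A\subseteq M$ with $M/A\cong N$ for some direct summand $N$ of $M$, and write $M=N\oplus N'$. Applying $(D1)$ to $A$ produces a decomposition $M=M_1\oplus M_2$ with $M_1\subseteq A$ and $K:=M_2\cap A$ small in $M$, so $A=M_1\oplus K$ and the natural isomorphism $M_2/K\xrightarrow{\sim}M/A$ composed with the given $M/A\cong N$ yields an isomorphism $\psi:M_2/K\to N$. It suffices to show $K=0$, for then $A=M_1$ is a direct summand. My strategy is to construct an isomorphism $\Phi:M/K\to M$: the composition $M\twoheadrightarrow M/K\xrightarrow{\Phi}M$ is then an epimorphism with kernel equal to the small submodule $K$, which by Lemma \ref{insert1} must be an automorphism, forcing $K=0$. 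Since $M/K\cong M_1\oplus N$ via $\psi$ and $M=N\oplus N'$, the construction of $\Phi$ reduces to producing an isomorphism $\theta:M_1\to N'$, after which $\Phi\bigl(m_1+(m_2+K)\bigr):=\theta(m_1)+\psi(m_2+K)$ is visibly an isomorphism onto $N'\oplus N=M$.

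The isomorphism $\theta$ will be extracted via Theorem \ref{char}. Since $R$ is right perfect, $M$ has a projective cover $\pi:P\to M$ with kernel $L$ small in $P$, and Theorem \ref{char} yields $\sigma(L)=L$ for every $\sigma\in\mathrm{Aut}(P)$. Because $L$ is small in $P$, idempotents of $\mathrm{End}(M)$ lift to idempotents of $\mathrm{End}(P)$; I use this to lift the two decompositions $M=M_1\oplus M_2$ and $M=N\oplus N'$ to $P=P_1\oplus P_2=Q_1\oplus Q_2$, where $P_i$ (respectively $Q_j$) is the projective cover of the corresponding summand of $M$. The defining relation $\pi e_i=\bar{e}_i\pi$ for the lifted idempotents forces them to carry $L$ into itself, whence $L=(L\cap P_1)\oplus(L\cap P_2)=(L\cap Q_1)\oplus(L\cap Q_2)$. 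Since $M_2/K\cong N$ with $K$ small in $M_2$, the modules $M_2$ and $N$ share a projective cover, so $P_2\cong Q_1$; Krull--Schmidt for projective modules over the perfect ring $R$ then cancels to give $P_1\cong Q_2$. Fixing isomorphisms $\alpha:P_1\to Q_2$ and $\beta:P_2\to Q_1$, the map $\tau(p_1+p_2):=\alpha(p_1)+\beta(p_2)$ is an automorphism of $P$ with $\tau(P_1)=Q_2$. The invariance $\tau(L)=L$, combined with the decompositions of $L$, forces $\tau(L\cap P_1)=L\cap Q_2$, so passing to quotients gives the desired isomorphism $\theta:M_1=P_1/(L\cap P_1)\xrightarrow{\sim}Q_2/(L\cap Q_2)=N'$.

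The main obstacle is the perfect-ring machinery in the second paragraph: one must verify idempotent lifting along $\pi$, the compatibility of $L$ with both induced decompositions of $P$, and the Krull--Schmidt cancellation that produces the swapping automorphism $\tau$. These are standard consequences of the perfect-ring hypothesis, but the non-routine input is Theorem \ref{char}, which supplies $\tau(L)=L$ and thereby transports $L\cap P_1$ exactly onto $L\cap Q_2$, unlocking the isomorphism $M_1\cong N'$ that drives the entire argument.
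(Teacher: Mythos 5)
Your approach is genuinely different from the paper's. The paper deduces $(D3)$ from Proposition~\ref{d3} (every module over a right perfect ring is supplemented, so the hypothesis of that proposition is automatic), notes that $M$ is therefore quasi-discrete with the additional property that every epimorphism with small kernel is an automorphism (Lemma~\ref{insert1}), and then simply cites \cite[Lemma~5.1]{MM} to conclude discreteness. You instead try to establish $(D2)$ directly by lifting decompositions to a projective cover and invoking Theorem~\ref{char}, which is a more self-contained and informative route and would in fact prove something stronger than the cited $(D3)$ argument.

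However, there is a real gap at the Krull--Schmidt cancellation step. You have $P = P_1 \oplus P_2 = Q_1 \oplus Q_2$ with $P_2 \cong Q_1$, and you want to conclude $P_1 \cong Q_2$. Cancellation of this kind fails for projective modules that are not finitely generated, even over a semisimple (hence perfect) ring: over a field $k$, write $P = k^{(\omega)}$ as $0 \oplus k^{(\omega)}$ and also as $Q_1 \oplus Q_2$ with $\dim Q_2 = 1$; then $P_2 = k^{(\omega)} \cong Q_1$ but $P_1 = 0 \not\cong Q_2$. Azumaya's Krull--Schmidt theorem gives a bijection between the indecomposable constituents of the two decompositions, but one cannot ``subtract'' matching infinite families to deduce that the complements match. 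Since $M$, hence $P$, is allowed to be of arbitrary size in the statement, this step does not go through as written, and it is precisely the step you need to manufacture the isomorphism $M_1 \cong N'$ that makes $\Phi: M/K \to M$ an isomorphism. Everything else in your argument is sound: the lifting of the two decompositions of $M$ to decompositions of $P$ along the projective cover, the compatibility of $L$ with both decompositions, the observation that $M_2$ and $N$ share a projective cover so $P_2 \cong Q_1$, the use of Theorem~\ref{char} to get $\tau(L) = L$ for the swapping automorphism $\tau$, and the final application of Lemma~\ref{insert1}. But without the cancellation you have no automorphism $\tau$ to work with, since its very construction requires isomorphisms $P_1 \to Q_2$ and $P_2 \to Q_1$. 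You would either need a finiteness hypothesis on $M$ or a different route to $M/K \cong M$; the paper sidesteps the issue entirely by proving only $(D3)$ and outsourcing the upgrade to discreteness to Mohamed--M\"uller.
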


\begin{proof}
Let $M$ be a dual automorphism-invariant lifting module. By Proposition \ref{d3}, $M$ satisfies the property $(D3)$. Thus $M$ is a quasi-discrete module with the property that every epimorphism $f\in \End(M)$ with small kernel is an isomorphism. Hence, by \cite[Lemma 5.1]{MM}, $M$ is a discrete module.
\end{proof}

\noindent Next, we proceed to establish some decomposition results for discrete modules. This will help us in the study of dual automorphism-invariant lifting modules over right perfect rings. 
 
\begin{lemma} \label{decomposition}
Let $R$ be a right perfect ring and let $M=P/K$ be a right $R$-module where $P$ is projective and $K\subset_{s}P$. Suppose $M$ is a discrete module. Then
\begin{enumerate}[(i)]
\item If $P$ decomposes as $P=P_1\oplus P_2$, then we get $M=M_1\oplus M_2$ with 
\[M_1=\frac{P_1+K}{K}, M_2=\frac{P_2+K}{K};\]
and $K=K_1\oplus K_2$ with
\[K_1=K\cap P_1, K_2=K\cap P_2.\]
\noindent This shows any decomposition of $P$ gives rise to natural decompositions of both $M$ and $K$.
\item  If $\sigma \in End(P)$ is an idempotent, then $\sigma(K)\subseteq K$.
\end{enumerate}
\end{lemma}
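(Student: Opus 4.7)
The plan is to prove (ii) first and then derive (i) as an immediate corollary. For (i), given $P = P_1\oplus P_2$, let $e\in\End(P)$ be the idempotent projection of $P$ onto $P_1$ along $P_2$. By (ii), both $e(K)\subseteq K$ and $(1-e)(K)\subseteq K$. Since $e(K)\subseteq P_1$ we get $e(K)\subseteq K\cap P_1=K_1$; since $e$ fixes $K_1$ pointwise, in fact $e(K)=K_1$, and symmetrically $(1-e)(K)=K_2$. For every $k\in K$, $k=e(k)+(1-e)(k)\in K_1+K_2$, and since $K_1\cap K_2\subseteq P_1\cap P_2=0$, the sum $K=K_1\oplus K_2$ is direct. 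The second isomorphism theorem then gives $M=P/K\cong(P_1/K_1)\oplus(P_2/K_2)$, which is exactly $M_1\oplus M_2$ under the natural identifications $P_i/K_i\cong(P_i+K)/K$.

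For (ii), the plan is to exploit the structural decomposition forced on $P$ by discreteness of $M$ over a right perfect ring. By the theory of discrete modules (cf.\ \cite{MM}), $M=\bigoplus_\alpha M_\alpha$ with each $M_\alpha$ local, and uniqueness of projective covers over a perfect ring forces $P\cong\bigoplus_\alpha P_\alpha$ with $P_\alpha$ the local indecomposable projective cover of $M_\alpha$, and correspondingly $K=\bigoplus_\alpha K_\alpha$ with $K_\alpha=K\cap P_\alpha\subset_s P_\alpha$; each $\End(P_\alpha)$ is then a local ring. Writing $\sigma\in\End(P)$ as a ``matrix'' $(\sigma_{\beta\alpha})$ with $\sigma_{\beta\alpha}:P_\alpha\to P_\beta$, idempotency of $\sigma$ together with the local identity $1_{P_\alpha}=\sigma_{\alpha\alpha}+(1-\sigma)_{\alpha\alpha}$ forces exactly one of $\sigma_{\alpha\alpha}$, $(1-\sigma)_{\alpha\alpha}$ to be a unit for each $\alpha$. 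Partitioning the indices accordingly and using the exchange/Krull--Schmidt--Azumaya property, one refines $P=\sigma P\oplus(1-\sigma)P$ into a decomposition compatible with $\bigoplus P_\alpha$, and a component-by-component verification of the off-diagonal entries then yields $\sigma_{\beta\alpha}(K_\alpha)\subseteq K_\beta$ for all $\alpha,\beta$, so that $\sigma(K)\subseteq K$.

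The main obstacle is executing this refinement without illegitimately using hypotheses beyond discreteness. The tempting shortcut of conjugating $\sigma$ by an automorphism $\theta$ of $P$ to a standard block projection would appeal to Theorem \ref{char} to ensure $\theta(K)=K$, but Theorem \ref{char} requires $M$ to be dual automorphism-invariant, which is not assumed here. The safer route is to argue entry by entry: for each $\sigma_{\beta\alpha}:P_\alpha\to P_\beta$, show that the composition $P_\alpha\xrightarrow{\sigma_{\beta\alpha}}P_\beta\twoheadrightarrow M_\beta$ annihilates $K_\alpha$ by factoring through $P_\alpha\twoheadrightarrow M_\alpha$; this is essentially the statement that the natural map $\Hom(P_\alpha,P_\beta)\to\Hom(P_\alpha,M_\beta)$ lands in the image of $\Hom(M_\alpha,M_\beta)$, and it follows from discreteness of $M$ (specifically from $(D1)$ applied to the submodule of $M$ generated by the image of $\sigma$). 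Should this direct argument become unwieldy, I would fall back on the decomposition theorem in \cite{MM} which encapsulates this assertion for discrete modules possessing a projective cover.
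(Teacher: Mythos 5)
Your plan reverses the paper's logical order: you want to prove (ii) first and then obtain (i) as a corollary, whereas the paper proves (i) directly and then observes (ii) is immediate. Your derivation of (i) from (ii) is clean and correct. The difficulty is that your proof of (ii) is not actually carried out, and the sketch you give has a substantive problem, not just ``unwieldy details.''

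Concretely, you reduce (ii) to the claim that, for the local decomposition $P=\bigoplus_\alpha P_\alpha$ with $K=\bigoplus_\alpha K_\alpha$, every off-diagonal entry $\sigma_{\beta\alpha}\colon P_\alpha\to P_\beta$ of the idempotent $\sigma$ sends $K_\alpha$ into $K_\beta$. You then assert that this is ``essentially the statement that the natural map $\Hom(P_\alpha,P_\beta)\to\Hom(P_\alpha,M_\beta)$ lands in the image of $\Hom(M_\alpha,M_\beta)$.'' But that last statement, if true for all $\alpha,\beta$, says exactly that every endomorphism of $P$ carries $K$ into $K$ --- i.e., that $M$ is quasi-projective. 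That is strictly stronger than the conclusion of the lemma (which concerns only idempotents of $\End(P)$), and it does not follow from discreteness alone: the paper's Proposition \ref{discrete} goes from dual automorphism-invariant $+$ lifting to discrete, not the other way, and later results make clear that discreteness over a perfect ring does not by itself yield quasi-projectivity. So you need to exploit the idempotency of $\sigma$ somewhere, and your argument never explains how the idempotency enters the ``component-by-component verification.'' The closing sentence offering to ``fall back on the decomposition theorem in \cite{MM}'' is an acknowledgment that this step is not done; that theorem gives the decomposition of a discrete module into locals, not the invariance of $K$ under idempotents of $\End(P)$.

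By contrast, the paper proves (i) directly from the module-theoretic axioms. Starting from $P=P_1\oplus P_2$, it shows that $\frac{P_1+K}{K}\cap\frac{P_2+K}{K}$ is small in $M$, then uses $(D1)$ to produce summands $\frac{A}{K}\subseteq\frac{P_1+K}{K}$ and $\frac{B}{K}\subseteq\frac{P_2+K}{K}$ that are mutual supplements, and then uses $(D3)$ to see that $\frac{A}{K}\cap\frac{B}{K}$ is a summand of $M$; being also small, this intersection is zero, yielding $M=\frac{P_1+K}{K}\oplus\frac{P_2+K}{K}$, from which $K=K_1\oplus K_2$ follows by a kernel comparison. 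Statement (ii) is then the one-line observation that, with $P_1=\sigma P$ and $P_2=(1-\sigma)P$, the resulting splitting $K=(K\cap P_1)\oplus(K\cap P_2)$ makes $\sigma(K)\subseteq K$ obvious. Your proposal is a genuinely different route, and proving (ii) first is an attractive idea since (i) would then be immediate, but as it stands the heart of (ii) is missing; I would recommend switching to the paper's direct $(D1)$/$(D3)$ argument for (i), or else fully executing an argument for (ii) that makes essential use of the idempotency of $\sigma$ (for instance by exploiting the complementing property of the local decomposition to reduce to block-diagonal form, which is precisely what the paper's (i) accomplishes).
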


\begin{proof}
\begin{enumerate}[(i)]
\item Let $P=P_1\oplus P_2$. Then $M=\frac{P_1+K}{K} + \frac{P_2+K}{K}$. Let $L_1$ and $L_2$ be projections of $K$ in $P_1$ and $P_2$ respectively. Then $L_1, L_2, L_1+L_2$ are small in $P$. Now $P_1 \cap (P_2+K)\subseteq L_1+L_2$, so $(P_1+K)\cap (P_2+K)\subseteq K+L_1+L_2$. This gives that $(\frac{P_1+K}{K}) \cap (\frac{P_2+K}{K})\subset_{s} M$. Since $M$ satisfies the property $(D1)$, we get that $M=\frac{A}{K} + \frac{B}{K}$ such that $\frac{A}{K}, \frac{B}{K}$ are summands of $M$ contained in $\frac{P_1+K}{K}$, $\frac{P_2+K}{K}$ respectively and are supplements of $\frac{P_2+K}{K}$, $\frac{P_1+K}{K}$ respectively. As $M$ satisfies the property $(D3)$, $\frac{A}{K} \cap \frac{B}{K}$ is a summand of $M$. However, $\frac{A}{K} \cap \frac{B}{K} \subseteq (\frac{P_1+K}{K}) \cap (\frac{P_2+K}{K})$ gives that $\frac{A}{K} \cap \frac{B}{K}$ is small in $M$. Therefore $M=\frac{A}{K} \oplus \frac{B}{K}$ and hence        
\[M=\frac{P_1+K}{K}\oplus \frac{P_2+K}{K}.\]
Let $K_1=K\cap P_1, K_2=K\cap P_2$. We have an isomorphism $\varphi:\frac{P_1}{K_1}\oplus \frac{P_2}{K_2}\rightarrow M$ given by $\varphi(x_1+K_1, x_2+K_2)=x_1+x_2+K$ where $x_1\in P_1, x_2\in P_2$. As $\varphi(\frac{K}{K_1+K_2})=0$, we get $K=K_1\oplus K_2$. Hence $K=(K\cap P_1) \oplus (K\cap P_2)$.
\item Let $P_1=\sigma P$, and $P_2=(1-\sigma)P$. Then $P=P_1\oplus P_2$. By (i), we have $K=K_1\oplus K_2$ where $K_1=K\cap P_1, K_2=K\cap P_2$. Clearly then $\sigma(K)\subseteq K$.
\end{enumerate}
\end{proof}

\noindent A ring $R$ is called a {\it clean ring} if each element $a\in R$ can be expressed as $a=e+u$, where $e$ is an idempotent in $R$ and $u$ is a unit in $R$. A module $M$ is called a {\it clean module} if $End(M)$ is a clean ring. The class of clean modules includes continuous modules, discrete modules, flat cotorsion modules, and quasi-projective right modules over a right perfect ring.

In the next theorem we show that every dual automorphism-invariant lifting module over a right perfect ring is quasi-projective.

\begin{theorem} \label{qp}
Let $R$ be a right perfect ring and let $M$ be a lifting right $R$-module. Then $M$ is dual automorphism-invariant if and only if $M$ is quasi-projective. 
\end{theorem}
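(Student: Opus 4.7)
The easy direction follows from Proposition~\ref{pseudo}: a quasi-projective module is pseudo-projective, hence dual automorphism-invariant. For the converse, assume $M$ is lifting and dual automorphism-invariant. My plan is to realize $M$ as $P/K$ with $P$ a projective cover and to show that every $\varphi\in\End(P)$ satisfies $\varphi(K)\subseteq K$; by the standard (Wu--Jans) characterization of quasi-projective modules in the presence of a projective cover, this will force $M$ to be quasi-projective.

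First I would set up the decomposition. By Proposition~\ref{discrete}, $M$ is discrete, and since $R$ is right perfect $M$ has a projective cover $\pi:P\to M$, giving $M=P/K$ with $K\subset_{s}P$. Over a right perfect ring, $P$ decomposes as $P=\bigoplus_{j\in J}P_j$ with each $P_j$ a local projective having local endomorphism ring. The canonical projections $e_j:P\to P_j\hookrightarrow P$ are idempotents in $\End(P)$, so Lemma~\ref{decomposition}(ii) yields $e_j(K)\subseteq K\cap P_j=:K_j$; summing over $j$ then gives $K=\bigoplus_{j}K_j$. Each summand $M_j=P_j/K_j$ is dual automorphism-invariant by Proposition~\ref{summand}, so Theorem~\ref{char} tells me that every automorphism of $P_j$ fixes $K_j$.

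The core step is to analyze a general $\varphi\in\End(P)$ entry by entry. Writing $\varphi$ in ``matrix form'' $(\varphi_{ij})$ with $\varphi_{ij}:P_j\to P_i$, I aim to prove $\varphi_{ij}(K_j)\subseteq K_i$ for all $i,j$. For $i\ne j$, the trick is to extend $\varphi_{ij}$ to $\psi\in\End(P)$ by $\psi|_{P_j}=\varphi_{ij}$ and $\psi|_{P_\ell}=0$ for $\ell\ne j$; since $\psi$ has image in $P_i$ and vanishes on $P_i$, one has $\psi^2=0$, making $1+\psi$ an automorphism of $P$. Theorem~\ref{char} then forces $(1+\psi)(K)=K$, hence $\psi(K)\subseteq K$, and in particular $\varphi_{ij}(K_j)\subseteq K\cap P_i=K_i$. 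For $i=j$, the element $\varphi_{jj}\in\End(P_j)$ is either a unit (so $\varphi_{jj}(K_j)=K_j$ by Theorem~\ref{char} applied to $M_j$) or lies in $J(\End(P_j))$, in which case $1+\varphi_{jj}$ is a unit and the same argument yields $\varphi_{jj}(K_j)\subseteq K_j$. Assembling over $i,j$ gives $\varphi(K)\subseteq K$ and the theorem follows.

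The main obstacle should be the bookkeeping when $|J|$ is infinite---specifically, justifying $K=\bigoplus_{j}K_j$ and confirming that the ``off-diagonal'' extension $\psi$ is a well-defined endomorphism of $P$---but both points fall cleanly out of Lemma~\ref{decomposition}. A slicker alternative would be to observe that $P$ itself is discrete (projective over a perfect ring satisfies $D1$ and $D2$) and therefore has a clean endomorphism ring, so every $\varphi=e+u$ with $e$ idempotent (preserving $K$ by Lemma~\ref{decomposition}(ii)) and $u$ an automorphism (preserving $K$ by Theorem~\ref{char}); I prefer the matrix argument, however, because it relies only on results already established in this paper.
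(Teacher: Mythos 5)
Your argument is correct, and your preferred ``matrix'' route is genuinely different from the paper's. The paper handles an arbitrary $\sigma\in\End(P)$ in one stroke: it invokes the result of Camillo--Khurana--Lam--Nicholson--Zhou that $\End(P)$ is clean, writes $\sigma=\alpha+\beta$ with $\alpha$ idempotent and $\beta$ an automorphism, and then $\alpha(K)\subseteq K$ by Lemma~\ref{decomposition}(ii) and $\beta(K)=K$ by Theorem~\ref{char}. You instead decompose $P=\bigoplus_j P_j$ into local projectives (Bass's structure theorem over a right perfect ring), establish $K=\bigoplus_j K_j$ via Lemma~\ref{decomposition}(ii), and then verify entrywise that the matrix components of $\sigma$ preserve the $K_j$'s, using the nilpotent trick $1+\psi$ for off-diagonal entries and the locality of $\End(P_j)$ for diagonal entries. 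Both proofs share the same skeleton---reduce to showing $\sigma(K)\subseteq K$, use discreteness via Proposition~\ref{discrete} to get the idempotent case from Lemma~\ref{decomposition}(ii), and use Theorem~\ref{char} for automorphisms---but they trade off external ingredients differently: the paper imports cleanness of $\End(P)$, you import Bass's decomposition of projectives over perfect rings. Your matrix proof is longer and requires some bookkeeping over infinite index sets (which you handle correctly: each $\sigma(x)$ has finite support, so it lands in $\bigoplus K_i=K$), while the paper's is shorter but leans on a black-box citation. The ``slicker alternative'' you mention at the end is essentially the paper's own proof, so you have in effect rediscovered it.

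One small point worth making explicit in your write-up: applying Theorem~\ref{char} to $M_j=P_j/K_j$ requires that $K_j$ be small in $P_j$, which does hold because $K_j=e_j(K)$ and the image of a small submodule under a projection onto a summand is small in that summand---but this is a hypothesis of Theorem~\ref{char}, so it deserves a sentence.
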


\begin{proof}
Suppose $M$ is dual automorphism-invariant. Since $M$ has a projective cover, we set $M=P/K$, where $P$ is projective and $K\subset_{s}P$. Let $\sigma \in End(P)$. We know that $End(P)$ is clean (see \cite{CKLNZ}). Therefore, $\sigma=\alpha + \beta$ where $\alpha$ is an idempotent in $End(P)$ and $\beta$ is an automorphism on $P$. Since $M$ is a dual automorphism-invariant lifting module over a right perfect ring, by Proposition \ref{discrete}, $M$ is discrete. Therefore, by Lemma \ref{decomposition}(ii), $\alpha(K) \subseteq K$. Since $M$ is a dual automorphism-invariant module, by Theorem \ref{char}, $\beta(K) \subseteq K$. Thus $\sigma(K) =(\alpha+\beta)(K)\subseteq K$. Hence $M$ is quasi-projective. The converse follows from Proposition \ref{pseudo}.  
\end{proof}

\begin{theorem} \label{relqp}
Let $R$ be a right perfect ring. If $M=M_1\oplus M_2$ is a dual automorphism-invariant right $R$-module, then both $M_1$ and $M_2$ are dual automorphism-invariant and they are projective relative to each other.
\end{theorem}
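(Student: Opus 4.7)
The first assertion, that $M_1$ and $M_2$ are themselves dual automorphism-invariant, is immediate from Proposition \ref{summand}, so the work goes into the mutual relative projectivity. Since $R$ is right perfect, each $M_i$ has a projective cover $\pi_i:P_i\to M_i$ with $K_i:=\Ker(\pi_i)$ small in $P_i$. Setting $P=P_1\oplus P_2$ and $K=K_1\oplus K_2$, the induced map $\pi_1\oplus \pi_2:P\to M$ is a projective cover of $M$; in particular $K$ is small in $P$ (since the sum of finitely many small submodules is small), so the hypotheses of Theorem \ref{char} are in force for $M\cong P/K$. The conclusion of that theorem is the tool that will drive the argument: every automorphism of $P$ must send $K$ onto $K$.

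To prove that $M_1$ is $M_2$-projective, I would take an arbitrary submodule $N\subseteq M_2$ and any homomorphism $f:M_1\to M_2/N$. Projectivity of $P_1$ lifts $f\circ \pi_1:P_1\to M_2/N$ through the canonical surjection $P_2\to M_2\to M_2/N$ to a homomorphism $\tilde g:P_1\to P_2$. The key step is to promote $\tilde g$ to a homomorphism $M_1\to M_2$, which amounts to showing $\tilde g(K_1)\subseteq K_2$. For this, consider the block-matrix map
\[
\sigma=\begin{pmatrix} 1_{P_1} & 0 \\ \tilde g & 1_{P_2}\end{pmatrix}:P\to P,
\]
which is an automorphism of $P$ (its inverse replaces $\tilde g$ by $-\tilde g$). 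By Theorem \ref{char}, $\sigma(K)=K$. Applying $\sigma$ to an element $(x,0)$ with $x\in K_1$ gives $(x,\tilde g(x))\in K=K_1\oplus K_2$, forcing $\tilde g(x)\in K_2$. Hence $\tilde g$ descends to a homomorphism $g:M_1\to M_2$, and a routine diagram chase using $\pi\circ \pi_2\circ \tilde g=f\circ \pi_1$ confirms that $g$ lifts $f$ modulo $N$. The argument is symmetric in the two summands, so $M_2$ is $M_1$-projective as well.

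The main obstacle is precisely the inclusion $\tilde g(K_1)\subseteq K_2$: without it, the lift obtained from projectivity of $P_1$ stays stuck at the level of $P$ and does not descend. The block-matrix automorphism trick together with Theorem \ref{char} is what supplies this inclusion. It is also essential that the decomposition $K=K_1\oplus K_2$ be clean, which is why I insist on projective covers of each summand rather than working with an arbitrary projective presentation of $M$; if $K$ did not split along $P_1\oplus P_2$, the same trick would only yield information modulo the mixing between the two components of $K$.
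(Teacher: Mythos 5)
Your proof is correct and follows essentially the same route as the paper: lift $f$ through projectivity of $P_1$ to a map $\tilde g:P_1\to P_2$, build the elementary shear automorphism $\sigma=\begin{pmatrix} 1 & 0 \\ \tilde g & 1\end{pmatrix}$ of $P$, invoke Theorem \ref{char} to get $\sigma(K)=K$ and hence $\tilde g(K_1)\subseteq K_2$, and descend. Your write-up is in fact somewhat cleaner than the paper's, which tangles the symbols $\lambda'$ and $\mu$ at the step where the map should already live on the projectives rather than the quotients.
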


\begin{proof}
We have already seen that a direct summand of a dual automorphism-invariant module is dual automorphism-invariant. 


Now, we proceed to show that $M_1$ and $M_2$ are projective relative to each other. Let $M_1=P_1/K_1$ and $M_2=P_2/K_2$ where $P_1, P_2$ are projective and $K_1\subset_{s} P_1, K_2\subset_{s} P_2$. Then $M=M_1\oplus M_2=\frac{P_1\oplus P_2}{K_1\oplus K_2}$. Note that the decomposition $M=M_1\oplus M_2$ gives rise to decomposition $P=P_1\oplus P_2$, where $M_1=\frac{P_1+K}{K}$ and $M_2=\frac{P_2+K}{K}$ and $K=K_1\oplus K_2$ where $K_1=K\cap P_1$, $K_2=K\cap P_2$. Thus $M_1\cong P_1/K_1$ and $M_2\cong P_2/K_2$. 

Let $\overline{L_2}=L_2/K_2$ be any submodule of $M_2$. Consider the exact sequence $M_2\rightarrow M_2/\overline{L_2}\rightarrow 0$. Let $\lambda: M_1\rightarrow M_2/\overline{L_2}$ be a homomorphism. This gives us a mapping \[\lambda': \frac{P_1}{K_1}\rightarrow \frac{P_2}{L_2}\] with $\lambda'(x_1+K_1)=x_2 +L_2$ if $\lambda(x_1+K_1)=(x_2+K_2)+\frac{L_2}{K_2}$. 

It lifts to a homomorphism $\mu: P_1\rightarrow P_2$. Then $P=P'_1\oplus P_2$ where $P'_1=\{x_1+\lambda'(x_1): x_1\in P_1\}$. We get an automorphism \[\sigma: P\rightarrow P\] where $\sigma(x_1+x_2)=x_1+\lambda'(x_1)+x_2$. 

Since $M$ is dual automorphism-invariant, we have $\sigma(K)=K=\sigma(K_1)\oplus \sigma(K_2)=K\cap P'_1 \oplus K\cap P_2$. This gives a decomposition \[M=\frac{P'_1+K}{K} \oplus \frac{P_2+K}{K}.\] We have an isomorphism \[\sigma': \frac{P_1+K}{K}\rightarrow \frac{P'_1+K}{K}\] given by $\sigma'(x_1+K)=\sigma(x_1)+K=x_1+\lambda'(x_1)+K$. Now, if $x_1\in K$, then $x_1+\lambda'(x_1)\in K$. This gives $\lambda'(x_1)\in K\cap P_2=K_2$. Hence $\lambda'$ induces mapping \[\bar{\mu}: \frac{P_1}{K_1}\rightarrow \frac{P_2}{K_2}\] given by $\bar{\mu}(x+K_1)=\lambda'(x)+K_2$. This shows that $M_1$ is projective with respect to $M_2$. Similarly, it can be shown that $M_2$ is projective with respect to $M_1$.   
\end{proof}

As a consequence it follows that

\begin{corollary}
If $R$ is a right perfect ring, then a right $R$-module $M$ is quasi-projective if and only if $M\oplus M$ is dual automorphism-invariant.
\end{corollary}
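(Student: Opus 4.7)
The corollary follows almost directly from the two results immediately preceding it, so my plan is to organize the argument into the two implications and invoke the existing machinery.

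For the forward direction, assume $M$ is quasi-projective. I would first recall (or briefly justify) the standard fact that $M$ quasi-projective implies $M \oplus M$ quasi-projective: if $M$ is $M$-projective, then by the elementary lemma that $M$ is $N_1 \oplus N_2$-projective whenever it is $N_1$-projective and $N_2$-projective, $M$ is $M \oplus M$-projective; and by the companion lemma that a direct summand of an $N$-projective module is $N$-projective, each of the two copies of $M$ in $M \oplus M$ is $(M \oplus M)$-projective, hence $M \oplus M$ is $(M \oplus M)$-projective. (Neither lemma requires the perfect hypothesis.) In particular $M \oplus M$ is pseudo-projective, so by Proposition \ref{pseudo} it is dual automorphism-invariant.

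For the converse, suppose $M \oplus M$ is dual automorphism-invariant. Apply Theorem \ref{relqp} with $M_1 = M_2 = M$: the conclusion gives that $M$ is projective relative to $M$, which is literally the definition of $M$ being quasi-projective.

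The main obstacle, if any, is just the forward direction's appeal to $M^2$ being quasi-projective; since this is a classical result in the Wisbauer/Mohamed--M\"uller style relative projectivity calculus and does not use the perfect hypothesis, I would state it in one sentence and cite it rather than reprove it. Everything else is a direct invocation of Theorem \ref{relqp} and Proposition \ref{pseudo}, so the whole corollary reduces to about four lines.
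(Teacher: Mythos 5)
Your proof is correct and follows exactly the same route as the paper: the paper likewise deduces the forward implication from the fact that $M$ quasi-projective forces $M\oplus M$ quasi-projective (stated without proof there, spelled out carefully and correctly by you via the relative-projectivity calculus), then applies Proposition~\ref{pseudo}, and obtains the converse by applying Theorem~\ref{relqp} with $M_1=M_2=M$.
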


\begin{proof}
Let $R$ be a right perfect ring. Suppose $M$ is a quasi-projective right $R$-module. Then $M\oplus M$ is quasi-projective and hence dual automorphism-invariant. Conversely, suppose $M\oplus M$ is dual automorphism-invariant. Then by Theorem \ref{relqp}, $M$ is $M$-projective, that is, $M$ is quasi-projective.
\end{proof}

\begin{proposition}
Let $R$ be an artinian serial ring. Then a right $R$-module $M$ is dual automorphism-invariant if and only if $M$ is quasi-projective.
\end{proposition}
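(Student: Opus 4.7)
The backward implication is immediate from Proposition~\ref{pseudo}: every quasi-projective module is pseudo-projective, and every pseudo-projective module is dual automorphism-invariant.

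For the forward direction, the plan is to reduce to Theorem~\ref{qp}. An artinian serial ring $R$ is right (and left) perfect, so it suffices to show that \emph{every} right $R$-module is a lifting module. Once this is established, any dual automorphism-invariant right $R$-module $M$ is automatically a lifting dual automorphism-invariant module over a right perfect ring, and Theorem~\ref{qp} immediately yields that $M$ is quasi-projective. No further analysis of $M$ itself, its projective cover, or the clean-decomposition argument in the proof of Theorem~\ref{qp} is then required.

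To verify that every right $R$-module over an artinian serial ring $R$ is lifting, I would invoke the classical structure theory of modules over artinian serial (= generalized uniserial) rings: every right $R$-module decomposes as a direct sum of uniserial modules, each of which is hollow and local. The serial hypothesis on $R$ forces strong relative-projectivity among indecomposable uniserial summands, and from this one deduces that such a direct sum satisfies property $(D1)$ (in fact it is quasi-discrete, and even discrete). This is a standard result; a convenient reference is Mohamed--Müller's monograph on continuous and discrete modules.

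The main obstacle in this plan is the last step, namely correctly locating in the literature (or, failing that, proving from the uniserial decomposition) that every module over an artinian serial ring is lifting. Everything else is routine: combining this structural fact with Theorem~\ref{qp} concludes the proof in one line.
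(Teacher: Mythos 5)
The backward direction is correct and is exactly the paper's (trivial) argument. The forward direction, however, rests on a claim that is false: it is \emph{not} true that every right module over an artinian serial ring is lifting. The correct statement (due essentially to Oshiro; see also Clark--Lomp--Vanaja--Wisbauer) is that every right $R$-module is lifting if and only if $R$ is artinian serial \emph{and} $J(R)^2 = 0$. The additional nilpotency-degree condition is essential, so your reduction to Theorem~\ref{qp} does not go through for a general artinian serial ring.

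A concrete counterexample to your key claim: let $R = \mathbb{Z}/p^3\mathbb{Z}$ (artinian uniserial, $J(R)^2 \neq 0$) and $M = \mathbb{Z}/p\mathbb{Z} \oplus \mathbb{Z}/p^3\mathbb{Z}$. Take $A = \langle (1,p) \rangle \subseteq M$, a cyclic submodule isomorphic to $\mathbb{Z}/p^2\mathbb{Z}$. Then $A$ is not a direct summand of $M$ (since $M$ has no summand isomorphic to $\mathbb{Z}/p^2\mathbb{Z}$, by uniqueness of the decomposition into cyclics), $A$ is not small (since $(1,p) \notin pM = J(M)$), and the only nonzero proper submodule of $A$ is $pA = \langle(0,p^2)\rangle \subseteq J(M)$, which is small and hence cannot be a nonzero direct summand. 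Therefore there is no decomposition $M = N_1 \oplus N_2$ with $N_1 \subseteq A$ a summand and $N_2 \cap A$ small, so $M$ fails $(D1)$ and is not lifting. (Consistently with the proposition, this $M$ is also not quasi-projective and not dual automorphism-invariant.) The paper's proof avoids any lifting hypothesis entirely: it decomposes $M$ as a direct sum of uniserial modules, uses Theorem~\ref{relqp} to obtain relative projectivity among the summands, shows that summands with the same top must be isomorphic, and then exploits the structure of artinian serial rings to deduce $M$-projectivity of the homogeneous blocks. You would need to replace your lifting step with an argument of that flavor.
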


\begin{proof}
Suppose $M$ is dual automorphism-invariant. Since $R$ is artinian serial, $M=\oplus_{i=1}^{n} M_i$, where each $M_i$ is uniserial. Since $M$ is dual automorphism-invariant, by Theorem \ref{relqp}, each $M_i$ is projective with respect to $M_j$, for each $j\neq i$. 

Let $M_i, M_j$ be such that $\frac{M_i}{M_iJ(R)}\cong \frac{M_j}{M_jJ(R)}$. Then, since $M_i, M_j$ are projective relative to each other, we can lift this isomorphism to give $M_i\cong M_j$. 

So now $M=\oplus_{i=1}^{m} L_i$, where $L_i=\oplus_{k\in \Lambda} M_k$ with $M_i\cong M_k$ for each $i, k \in \Lambda$. Let $t$ be the length of $M_k\subset L_i$. Then, as $R$ is an artinian serial ring, $M_i$ is projective as an $R/J^t(R)$-module for each $i\in \Lambda$. This shows that $L_i$ is $M$-projective. Consequently, it follows that $M$ is $M$-projective. Thus $M$ is quasi-projective. The converse is obvious.     
\end{proof}

\bigskip

\section{Problems}

\bigskip

\begin{problem}
Let $M_1$ and $M_2$ be dual automorphism-invariant modules such that $M_1$ is $M_2$-projective and $M_2$ is $M_1$-projective. Is $M_1\oplus M_2$ a dual automorphism-invarint module?
\end{problem}

\begin{problem}
Characterize von Neumann regular rings over which every right $R$-module is dual automorphism-invariant.
\end{problem}

\begin{problem}
Characterize rings over which each cyclic module is dual automorphism-invariant?
\end{problem}

\bigskip

\begin{center}
Acknowledgment
\end{center} 

The first author would like to thank the Saint Louis University for its warm hospitality during his visit in September 2011.

\bigskip

\bigskip

\bigskip

\bigskip

\bigskip

\end{document}